\def\P{{\mathbb P}}
\def\E{{\mathbb E}}
\def\N{{\mathbb N}}
\def\R{{\mathbb R}}
\def\1{{\mathbf 1}}
\def\D {\mathcal{D}}
\def\L {\bar{L}}
\def\z {\mathbf{z}}
\newtheorem{theorem}{Theorem}[section]
\newtheorem{lemma}[theorem]{Lemma}
\newtheorem{corollary}[theorem]{Corollary}
\newtheorem{proposition}[theorem]{Proposition}
\newtheorem{theor}[theorem]{Theorem}
\newtheorem{remark}[theorem]{Remark}
\newenvironment{proof}[1][Proof.]{\textbf{#1} }{\hfill $\blacksquare$}
\def\beq{\begin{equation}}
\def\eeq{\end{equation}}
\newcommand{\bei}{\begin{itemize}}
\newcommand{\eei}{\end{itemize}}
\newcommand{\ben}{\begin{enumerate}}
\newcommand{\een}{\end{enumerate}}
\newcommand{\beqn}{\begin{eqnarray}}
\newcommand{\beqnn}{\begin{eqnarray*}}
\newcommand{\eeqn}{\end{eqnarray}}
\newcommand{\eeqnn}{\end{eqnarray*}}
\newcommand{\brm}{\begin{rmk}}
\newcommand{\erm}{\end{rmk}}
\begin{document}

\title{Functional law of large numbers and central limit theorem for Crump-Mode-Jagers branching processes}

\author{
Ibrahima~Dram\'{e}  \footnote{  \scriptsize{Université Cheikh Anta Diop de Dakar, FST, LMA, 16180 Dakar-Fann, S\'en\'egal. iboudrame87@gmail.com
}}
\and
Etienne~Pardoux \setcounter{footnote}{6}\footnote{ \scriptsize {Aix-Marseille Universit\'{e}, CNRS,  Centrale Marseille, I2M, UMR 7373, 13453 Marseille, France. etienne.pardoux@univ-amu.fr}}
}

\maketitle

\begin{abstract}
We establish a Law of Large Numbers and a Central Limit Theorem for a class of Crump Mode Jagers continuous time branching processes, where the birth rate is age dependent, and also random (different from one individual to the next), in the limit of a large number of ancestors. The only difficulty concerns the tightness in the Skorohod space $D$  for the central limit theorem. We exploit a criterion for the CLT in $D$ due to M. Hahn \cite{MH}. 
\end{abstract}

\vskip 3mm
\noindent{\textbf{Keywords}: } CMJ Branching processes; Poisson random measures; Law of large numbers; Central limit theorem; Tightness.
\vskip 3mm

\section{Introduction}
Crump-Mode-Jagers processes (CMJ-processes), as general continuous time and discrete state space branching process models with age-dependent reproduction mechanism, were introduced in \cite{CJ}, \cite{JA} with the study of biological populations as their main motivation. A CMJ-process is usually described as follows. It starts with a single individual at time 0 and this individual gives birth during its lifetime to a random number of offspring at a sequence of random times. Every child that is born evolves in the same way, independently of all other individuals in the population. In most applications, the lifetimes do not follow an exponential distribution, hence the process is not Markovian. 

Such processes are used as models in many applications. For instance, it is well--known that  the start of an epidemic is well approximated by a branching process, see \cite{BP}, and the same is true for the end of an epidemic, see
\cite{MDNP}, where the epidemic model is a general non Markovian model, thus leading to a CMJ process which is exactly of the type which will be considered in the present paper. CMJ branching processes have recently been used to model the spread of a disease (see \cite{BGMS}, \cite{OS}) or the allelic distribution of a population which reproduces according to a CMJ process, and is subject to mutations at a certain rate, see \cite{NCAL}, \cite{NCAL2}. Other examples of application appears in queuing theory, see e. g. \cite{GS}, \cite{LSZ}. 

Note that CMJ-processes are neither Markov processes (unless the lifetime distribution is exponential) nor semi-martingales, methods and tools developed for Galton-Watson processes usually can not be applied to study CMJ-processes. In particuar, the law of such a process cannot be easily computed, so that approximations of that law can be useful, whenever it is available.  This is our motivation for establishing  a functional law of large numbers and a functional central limit theorem, in the limit of the number of ancestors tending to infinity.   

Let us first describe the precise class of CMJ processes which will be considered in this paper. The law of the lifetime of each individual will be very general. The birth times of the offspring of each individual (after his/her birth) will be described as the jump times of the following counting process:
\[ A(t)=\int_0^t\int_0^\infty{\bf1}_{u\le b(s)}Q(ds,du),\]
where $b$ and $Q$ are independent, $b$ is an $\R_+$ valued process, $b(t)$ being the rate at which the considered individual of age $t$ gives birth to offspring, while $Q$ is a standard Poisson Random Measure on $\R_+^2$. Note that it is quite natural to assume that the birth rate depends upon the age, and vary from one individual to the next. Moreover, we will allow multiple births  at each birth time. 

As already explained, our goal is to approximate, via both the law of large numbers and the central limit theorem,
the law of our CMJ branching process, in case of a large number of ancestors. It is fair to say that the law of large numbers and the finite dimensional convergence in the functional CLT are easy consequences of very classical results. The only
difficulty in our work is to establish the functional convergence in the Skorohod space $D$ for the central limit theorem.  For that sake, our basic tool is the result of \cite{MH}, which gives a sufficient condition for the CLT in $D$. Our technique for verifying the conditions in the main theorem of \cite{MH} uses formulas for some moments of products of integrals of a deterministic function w.r.t. a compensated Poisson Random Measure (denoted below PRM).  

Note that functional law of large numbers and central limit theorems have been obtained for age-structured population processes, which are more general than branching process, see in particular \cite{Oels} and \cite{FJSwang}. However, having a birth rate which is not only age dependent but also random (different from one individual to the next) seems to be new. Of course, the fluctuations of the birth rate adds an additional term in the CLT. Note that the authors expect to generalize soon their work to take into account mean-field interaction.

The paper is organized as follows : in Section 2 we describe precisely the class of  Crump Mode Jagers continuous time branching processes to which our results apply. Section 3 is devoted to the statements and a good part of the proofs of our results, namely the Law of Large Numbers and the Central Limit Theorem, in the limit of a large number of ancestors. The most technical results needed for the proof of the Central Limit Theorem are established in Section 4, where we exploit the sufficient condition for the CLT in $D$ due to M. Hahn  \cite{MH}, and establish en estimate for the second moment of the product of two integrals with respect to a compensated Poisson random measure.

 We shall assume that all random variables in the paper are defined on the same probability space $( \Omega,\mathcal{F},\mathbb{P})$. We shall use the following notations: $\mathbb{Z}_+=\{0,1, 2,... \}$, $\mathbb{N}=\{1, 2,... \}$, $\mathbb{R}=(-\infty, \infty)$ and $\mathbb{R}_+=[0, \infty)$. For $x$ $\in$ $\mathbb{R}_+$, $[x]$ denotes the integer part of $x$. Let $L^2(\Omega)$ be the space of square integrable random variables, $C([0,\infty), \mathbb{R}_+)$ denote the space of $\mathbb{R}_+$-valued continuous functions defined on $[0,\infty)$, $D=D([0,\infty), \mathbb{R})$ and $D_+=D([0,\infty), \mathbb{R}_+)$ denote resp. the space of functions from $[0,\infty)$ into $\mathbb{R}$, resp. $\mathbb{R}_+$, which are right continuous and have left limits at any $t>0$ (as usual such a function is called c\`adl\`ag). We shall always equip the spaces $D$ and $D_+$ with the Skorohod topology, as described in Chapter 3 of \cite{Bill}. For $x,y\in \R$, we use the notations $x \wedge y = \min\{x,y\}$ and $x \vee y= \max\{x,y\}$.  In this paper a unique letter $C$ will denote a constant which may differ from line to line. 

\section{Preliminaries}
We first consider the evolution in time of a population started with a single ancestor. In other words, we shall be interested in the random function $Z(t)$ 
representing the number of individuals alive at time $t$ who are descendants of a single individual born at time $t=0$. Each individual appearing in the population lives for a random length of time $\eta$, and at random times during its life gives birth to offsprings that behave in a similar fashion, the behavior of each individual being independent of all others. Let $0< t^{(1)}< t^{(2)}< \cdots < t^{(n)}< \cdots$ be the birth times of the offsprings of the ancestor. Then the random function 
$A(t)= \sum_{j=1}^{\infty} \mathds{1}_{t^{(j)}\le t} $
is the number of birth events of offsprings of the ancestor up to time $t$. Clearly, we have 
\begin{equation}\label{eqZt1}  
Z(t)= \mathds{1}_{\eta >t} + \sum_{j=1}^{A(t)} \sum_{i=1}^{L_j}Z_{j,i}(t-t^{(j)}),
\end{equation} 
where $Z_{j,i}(t)  \stackrel{(d)}{=}  Z(t)$ and are independent and identically distributed (i.i.d) random variables, and $L_j$ denotes the number of twins born at the birth time  $t^{(j)}$. We assume that the birth events happen as follows.
\begin{equation*}
A(t)= \int_0^t \int_0^{b(s)} Q(ds,du),
\end{equation*}
where $Q(ds,du)$ is a Poisson random measures on $\R_+ \times \R_+$ with mean measure $dsdu$, and $b$ is an $\R_+$--valued random process independent of $Q$.  Let $\pi$ denote the common law of the random processes $\{Z_{j,i}(.), \ j, i \ge 1\}$. We can rewrite \eqref{eqZt1} as
\begin{equation}\label{eqZt2}   
Z(t)= \mathds{1}_{\eta >t} +  \int_0^t \int_0^{b(s)}\int_{\D} \sum_{i=1}^\ell z_i(t-s) \mathcal{M}(ds,du,d\z),
\end{equation} 
where $\mathcal{M}(ds,du,d\z)$ is a Poisson random measures on $\R_+ \times \R_+ \times \D$ with mean measure $\mu(ds,du,d\z)=dsdu\Pi(d\z)$, again independent of $b$, and such that $Q$ is the projection of $\mathcal{M}$ on the first two coordinates.
$\Pi$ is a probability distribution on $\D:=\N\times D^\infty$. It is the law of $(L, Z_1,Z_2,\ldots,)$, where $L, Z_1,Z_2, \ldots$ are mutually independent, the law of $L$ is that
of the number of twins at each birth event and $Z_1,Z_2, \ldots$ are i.i.d., all having the law $\pi$. We shall denote below by $\L$ the expectation of $L$.
Above $\z$ stands for $(\ell,z_1,z_2,\ldots)$.

The joint distribution of $b$ and $\eta$ is the same for each individual, but we do not require that $b$ and $\eta$ be independent. On the other hand, we assume that there is independence between the pair $(b, \eta)$ and $\mathcal{M}$.

Let us define $\bar{b}(t)=\E(b(t))$ the expectation of the random function $b$, $F(t)=\P(\eta\le t)$ the cumulative distribution function of $\eta$,  $F^c(t)=1-F(t)$ its survival function or reliability function and $M_1(t)= \E(Z(t))$ the expectation of the random process $Z$. It is easy to see that the function $M_1(t)$ satisfies the integral equation
\begin{equation}\label{eqMt1}   
M_1(t)= F^c(t)+  \int_0^t  \L M_1(t-s) \bar{b}(s) ds.
\end{equation} 
 Let us rewrite \eqref{eqZt2} in the following form, with $\bar{\mathcal{M}}=\mathcal{M}-\mu$,
\begin{equation}\label{eqZt3}   
Z(t)= \mathds{1}_{\eta >t} +\int_0^t \L M_1(t-s)  b(s) ds  + \int_0^t \int_0^{b(s)} \int_{\D} \sum_{i=1}^\ell z_i(t-s)  \bar{\mathcal{M}}(ds,du,d\z)\,.
\end{equation}

\section{Functional law of large numbers and central limit theorem}
Let $x>0$ be arbitrary, and let $N\geqslant1$ be an integer which will eventually go to infinity. Let $(Z^{N,x}(t))_{t\geq 0}$ denote the CMJ-process which describes the number of descendants alive at time $t$ of $[Nx]$ ancestors. We attribute to each individual in the population a mass equal to $1/N$ and we want to study the behaviour of the process $X^{N,x}(t)= N^{-1}Z^{N,x}(t)$. Since $Z^{N,x}$ satisfies the branching property, it can be written as $Z^{N,x}(t)= \sum_{k=1}^{[Nx]} Z_k(t)$, where for any $k\ge 1$, $Z_k(t)$ is a CMJ-process which describes the number of descendants alive at time $t$ of a single ancestor. Thus, we obtain 
\begin{equation} \label{eqXNxt1} 
X^{N,x}(t)= N^{-1} \sum_{k=1}^{[Nx]} Z_k(t). 
\end{equation} 
\subsection{Functional law of large numbers}
In this subsection, we only need to assume that the random function $b$ and $L$ satisfy the following assumption
\begin{equation*}
{(\bf H_1)}  :  \quad  \L<\infty,\ \text{ and for any} \ T>0, \  \sup_{0<t<T} \bar{b}(t)<\infty.
\end{equation*} 
It clearly follows from ${(\bf H_1)}$, \eqref{eqMt1} and Gronwall's Lemma: 
 \begin{align}\label{M1estim}
\sup_{0<t<T} M_1(t)<\infty,\quad\text{for all }T>0\,.
\end{align}

 Let us now establish our functional Law of Large Numbers.
 \begin{theor} Suppose that Assumption ${(\bf H_1)}$ is satisfied. Then, as $N \rightarrow \infty$, 
$(X^{N,x}(t), t\ge 0)$ converges to $(xM_1(t), t\ge 0)$ almost surely in $D_+$, where $M_1$ is the solution of the integral equation \eqref{eqMt1}. 
 \end{theor}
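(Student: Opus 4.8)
The plan is to exploit the branching decomposition \eqref{eqXNxt1}, which already expresses $X^{N,x}$ as a normalized sum of i.i.d. copies of the single-ancestor process $Z$, and then to lift the one-dimensional strong law of large numbers to a functional statement in $D_+$. The obstruction is that $t\mapsto Z(t)$ is not monotone, so I would first record the decomposition $Z(t)=V(t)-W(t)$, where $V(t)$ counts all individuals (ancestor included) born up to time $t$ and $W(t)$ counts those that have died by time $t$. Both $V$ and $W$ are non-decreasing c\`adl\`ag counting processes, and accordingly $X^{N,x}=\mathcal V^N-\mathcal W^N$, where $\mathcal V^N(t)=N^{-1}\sum_{k=1}^{[Nx]}V_k(t)$ and $\mathcal W^N(t)=N^{-1}\sum_{k=1}^{[Nx]}W_k(t)$ are non-decreasing in $t$.

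Next I would control the means. Writing $m(t)=\E[V(t)]$ and $w(t)=\E[W(t)]$, the same first-moment computation that yields \eqref{eqMt1} gives the renewal equations $m(t)=1+\int_0^t\L\, m(t-s)\bar b(s)\,ds$ and $w(t)=F(t)+\int_0^t\L\, w(t-s)\bar b(s)\,ds$; by $(\mathbf{H_1})$ and Gronwall's lemma (exactly as for \eqref{M1estim}) both are finite on compacts, and subtracting shows that $m-w$ solves \eqref{eqMt1}, whence $m-w=M_1$. Note that $m$ is continuous, since births accumulate with a bounded intensity and $\E[V]$ carries no atom for $t>0$, whereas $w$, and hence $M_1$, inherits the jumps of $F$ at the atoms of the lifetime law. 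For each fixed $t$ the classical SLLN gives $\mathcal V^N(t)\to x\,m(t)$ and $\mathcal W^N(t)\to x\,w(t)$ almost surely; fixing a countable dense set $\mathcal T\subset[0,\infty)$ containing all atoms of $F$, these hold simultaneously for all $t\in\mathcal T$ on a single event of full probability.

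It then remains to upgrade the pointwise convergence on $\mathcal T$ to convergence in $D_+$. For the non-decreasing family $\mathcal V^N$ converging to the continuous limit $x\,m$, a Dini--P\'olya sandwiching argument across neighbouring points of $\mathcal T$ yields locally uniform convergence, i.e. $\sup_{t\le T}|\mathcal V^N(t)-x\,m(t)|\to 0$ a.s. for every $T$. For $\mathcal W^N$, which is a normalized sum of i.i.d. point processes of death times with integrable total mass on compacts, a generalized Glivenko--Cantelli theorem (a uniform SLLN over the VC class of intervals $[0,t]$), or equivalently a monotone argument treating the atoms of $F$ separately, gives $\mathcal W^N\to x\,w$ in $D_+$ for the Skorohod topology. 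Finally, since $\mathcal V^N$ converges \emph{uniformly} to a continuous function, addition is continuous at $(x\,m,x\,w)$ in the $J_1$ topology, so $X^{N,x}=\mathcal V^N-\mathcal W^N\to x(m-w)=x\,M_1$ in $D_+$, almost surely.

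The one genuinely delicate point, which I regard as the \textbf{main obstacle}, is the behaviour at the jumps of $M_1$, located precisely at the atoms of $F$: one must check that $\mathcal W^N$ converges to $x\,w$ in the Skorohod sense across these jumps, its own jumps clustering near and localizing at each atom in the limit, and that subtracting the two separately converging monotone processes does not destroy Skorohod convergence. Both difficulties dissolve because the birth-count part converges uniformly to a continuous limit, reducing the subtraction to a continuous operation, while the death-count jumps are carried by the ancestors dying at the atoms of $F$. If $F$ is continuous then $M_1$ is continuous and the whole argument collapses to locally uniform convergence, which is entirely routine.
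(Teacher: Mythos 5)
Your argument is correct, but it takes a genuinely different and more self-contained route than the paper. The paper disposes of the theorem in two lines: it writes $X^{N,x}(t)-xM_1(t)$ as $\frac{[Nx]}{N}\bigl(\frac{1}{[Nx]}\sum_k Z_k(t)-M_1(t)\bigr)+\frac{[Nx]-Nx}{N}M_1(t)$ and invokes Rango Rao's strong law of large numbers for $D$-valued random variables \cite{RR} for the first term (its hypothesis $\E\sup_{t\le T}Z(t)\le \E V(T)<\infty$ holding under $(\mathbf{H_1})$), plus the bound \eqref{M1estim} for the second. What you do is, in effect, reprove Rango Rao's theorem in this special case: the monotone decomposition $Z=V-W$, the renewal equations identifying $m-w=M_1$, the pointwise SLLN on a countable dense set, the P\'olya argument for the monotone summands with continuous means, and a Glivenko--Cantelli argument for the part of $W$ carrying the atoms of $F$. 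This costs a page instead of a citation, but it buys transparency about where the discontinuities of $M_1$ come from (exactly the atoms of $F$, via the ancestors' deaths) and, if pushed through via the split $W=\mathds{1}_{\eta\le\cdot}+W^{\mathrm{desc}}$, it actually yields the stronger conclusion of locally \emph{uniform} a.s. convergence, not merely Skorohod convergence. One suggestion to tighten the only delicate step: rather than arguing that the jumps of $\mathcal{W}^N$ ``localize'' at the atoms of $F$ (pointwise convergence of monotone functions to a monotone limit does \emph{not} by itself give $J_1$ convergence, as jump-splitting examples show), note that $N^{-1}\sum_k\mathds{1}_{\eta_k\le t}$ converges uniformly to $\frac{[Nx]}{N}F(t)\to xF(t)$ by the classical Glivenko--Cantelli theorem, while $w-F=\int_0^{\cdot}\L\,w(\cdot-s)\bar b(s)\,ds$ is continuous (indeed locally Lipschitz under $(\mathbf{H_1})$), so the descendants' death count falls under your P\'olya argument; then every piece converges locally uniformly and no Skorohod technology is needed at all.
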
 
\begin{proof}
From \eqref{eqXNxt1}, we deduce 
\begin{equation*}
X^{N,x}(t)-xM_1(t)= \frac{[Nx]}{N} \left( \frac{1}{[Nx]} \sum_{k=1}^{[Nx]} Z_k(t) - M_1(t) \right) + \left( \frac{[Nx]-Nx}{N}\right) M_1(t).
\end{equation*} 
From Thereom 1 in \cite{RR}, the first term on the right  converges to $0$ a. s. in $D$, while the convergence to $0$ of the second term locally uniformly in $t$ follows from \eqref{M1estim}.
 \end{proof}
\subsection{Functional central limit theorem}
In the rest of this section, we study the convergence in distribution of the process defined by
\begin{equation*}
\mathcal{R}^{N,x}(t)= \sqrt{N}\left(X^{N,x}(t)-xM_1(t) \right).
\end{equation*} 
From \eqref{eqMt1}, \eqref{eqZt3} and \eqref{eqXNxt1}, we can rewrite $\mathcal{R}^N$ in the form 
 \begin{equation}\label{eqRNxt1}
\mathcal{R}^{N,x}(t)= \sqrt{\frac{[Nx]}{N}}\left(\mathcal{U}_1^{N,x}(t)+\mathcal{U}_2^{N,x}(t)+\mathcal{U}_3^{N,x}(t) \right)+ \varepsilon^{N,x}(t),
\end{equation} 
with $\varepsilon^{N,x}(t)=\left(\frac{[Nx]-Nx}{\sqrt{N}}\right)\L\, M_1(t)$, and\footnote{The first term reflects the randomness of $\eta$, the second one the randomness of $b$, and the third one the randomness of $\mathcal{M}$.} 
 \begin{align*}
\mathcal{U}_1^{N,x}(t)= \sqrt{[Nx]} \left( \frac{1}{[Nx]}\sum_{k=1}^{[Nx]}  \mathds{1}_{\eta_k >t}-  F^c(t) \right)&,\
\mathcal{U}_2^{N,x}(t)=  \frac{1}{\sqrt{[Nx]}} \sum_{k=1}^{[Nx]}\int_0^t  \left( b_k(s) - \bar{b}(s)  \right)\L\, M_1(t-s)ds,
\\
\mathcal{U}_3^{N,x}(t)=  \frac{1}{\sqrt{[Nx]}} &\sum_{k=1}^{[Nx]} \int_0^t \int_0^{b_k(s)} \int_{\D} \sum_{i=1}^\ell z_i(t-s)  \bar{\mathcal{M}}_k(ds,du,d\z).
\end{align*}
where $(\eta_k)_{k\ge 1}$, $\{b_k(t), t\ge 0\}_{k\ge 1}$ and $(\mathcal{M}_k)_{k\ge 1}$ are respectively identically distributed independent copies of $\eta$, $\{b(t), t\ge 0\}$ and $\mathcal{M}$, defined in \eqref{eqZt2}. Moreover the pair $(b_k, \eta_k)$ and $\mathcal{M}_k$ are independent, for all $k\ge 1$.

In the next two results, we shall assume that $L$ and the random function $b$ satisfy the following hypothesis
\begin{equation*}
{(\bf H_2)}  :  \quad  \E[L^2]<\infty,\ \text{ and for any} \ T>0, \ \sup_{0<t<T} \E(b^2(t))<\infty .
\end{equation*} 
We shall need
 \begin{proposition}\label{M3tC}
Suppose that Assumption ${(\bf H_2)}$ is satisfied. For any $T>0$, there exists $C$ such that 
 \begin{align*}
\sup_{0<t<T} \E([Z(t)]^2)\le C.
\end{align*} 
  \end{proposition}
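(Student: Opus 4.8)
The plan is to derive a linear renewal-type integral inequality for $M_2(t):=\E([Z(t)]^2)$ and to close it with Gronwall's Lemma. Starting from \eqref{eqZt2}, I would write $Z(t)=\mathds{1}_{\eta>t}+I(t)$ with
\[ I(t)=\int_0^t\int_0^{b(s)}\int_\D \sum_{i=1}^\ell z_i(t-s)\,\mathcal{M}(ds,du,d\z), \]
so that $M_2(t)=F^c(t)+2\,\E[\mathds{1}_{\eta>t}I(t)]+\E[I(t)^2]$. Since $Z\ge0$, all the quantities below are nonnegative, which is what makes the one-sided bounds legitimate. First I would condition on the pair $(b,\eta)$, which is independent of $\mathcal{M}$: given $(b,\eta)$, $\mathcal{M}$ is still a PRM with intensity $ds\,du\,\Pi(d\z)$, and $I(t)$ is an integral of an integrand that is deterministic given the conditioning.

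The key tool is the elementary second-moment identity for a Poisson integral: if $N$ is a PRM with intensity $\nu$ and $g\ge0$ is deterministic, then $\E[(\int g\,dN)^2]=(\int g\,d\nu)^2+\int g^2\,d\nu$. Applied conditionally on $b$, this gives
\[ \E[I(t)^2\mid b]=\Big(\int_0^t\!\!\int_0^{b(s)}\!\!\int_\D \sum_{i=1}^\ell z_i(t-s)\,ds\,du\,\Pi(d\z)\Big)^2+\int_0^t\!\!\int_0^{b(s)}\!\!\int_\D \Big(\sum_{i=1}^\ell z_i(t-s)\Big)^2 ds\,du\,\Pi(d\z). \]
The two $\Pi$-integrals are evaluated using that under $\Pi$ the pair $(L,Z_1,Z_2,\dots)$ has $L$ independent of the i.i.d. sequence $(Z_i)$ with $\E[Z_i(r)]=M_1(r)$ and $\E[Z_i(r)^2]=M_2(r)$: namely $\int_\D\sum_{i=1}^\ell z_i(r)\,\Pi(d\z)=\L M_1(r)$ and, conditioning on $L$,
\[ \int_\D\Big(\sum_{i=1}^\ell z_i(r)\Big)^2\Pi(d\z)=\L M_2(r)+\E[L(L-1)]\,M_1(r)^2. \]
After the $du$-integration this yields $\E[I(t)^2\mid b]=\big(\int_0^t b(s)\L M_1(t-s)\,ds\big)^2+\int_0^t b(s)\big(\L M_2(t-s)+\E[L(L-1)]M_1(t-s)^2\big)ds$.

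Next I would take expectation in $b$. The single $M_2$-term contributes $\L\int_0^t\bar b(s)M_2(t-s)\,ds$; the term carrying $\E[L(L-1)]\le\E[L^2]$ is bounded on $[0,T]$ by \eqref{M1estim} together with $\sup_{0<s<T}\bar b(s)<\infty$. The squared term is handled by Fubini and Cauchy--Schwarz, $\E[b(s)b(s')]\le(\E[b(s)^2]\E[b(s')^2])^{1/2}$, which is exactly where $\sup_{0<s<T}\E(b^2(s))<\infty$ from ${(\bf H_2)}$ enters; combined with $\sup M_1<\infty$ it is also bounded on $[0,T]$. Finally the cross term obeys $\E[\mathds{1}_{\eta>t}I(t)]\le\E[I(t)]=\int_0^t\bar b(s)\L M_1(t-s)\,ds\le C$, since $\mathds{1}_{\eta>t}\le1$ and all integrands are nonnegative. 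Collecting the bounded contributions into a constant $C_1(T)$ gives
\[ M_2(t)\le C_1(T)+\L\int_0^t\bar b(s)\,M_2(t-s)\,ds\le C_1(T)+\L\Big(\sup_{0<s<T}\bar b(s)\Big)\int_0^t M_2(s)\,ds,\qquad t\in[0,T], \]
whence $\sup_{0<t<T}M_2(t)\le C_1(T)\exp(\L T\sup_{0<s<T}\bar b(s))=:C$.

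The one genuine subtlety — and the step I expect to be the main obstacle — is that Gronwall's Lemma requires knowing a priori that $M_2$ is locally bounded, whereas finiteness of $\E[Z(t)^2]$ is part of what we are proving. I would break this circularity by truncating the number of generations: letting $Z_n(t)$ count only descendants in the first $n$ generations, one has $Z_0(t)=\mathds{1}_{\eta>t}$, $Z_n\uparrow Z$, and $Z_n$ satisfies the same equation with the offspring processes $z_i$ replaced by independent copies of $Z_{n-1}$. Each $M_2^{(n)}(t)=\E[Z_n(t)^2]$ is finite by induction, and the computation above gives $M_2^{(n)}(t)\le C_1(T)+\L\int_0^t\bar b(s)M_2^{(n-1)}(t-s)\,ds$ with the same $C_1(T)$ (using $M_1^{(n)}\le M_1$). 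Gronwall now applies legitimately at each finite level and produces the bound $C$ uniformly in $n$; monotone convergence then yields $M_2(t)=\lim_n M_2^{(n)}(t)\le C$, which is the claim.
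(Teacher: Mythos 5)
Your proposal is correct, and its core is the same as the paper's: derive a linear renewal inequality for $M_2(t)=\E([Z(t)]^2)$ from the second--moment formula for a Poisson integral (computed conditionally on $b$, using the independence of $(b,\eta)$ and $\mathcal{M}$), and close it with Gronwall's Lemma. The only structural difference is that you work from \eqref{eqZt2} with the uncompensated measure $\mathcal{M}$ and the identity $\E[(\mathbf{Q}(g))^2]=(\mu(g))^2+\mu(g^2)$, whereas the paper works from the centered form \eqref{eqZt3}, splits $Z(t)$ into three terms via $(a+b+c)^2\le 3(a^2+b^2+c^2)$, and applies $\E[(\bar{\mathbf{Q}}(f))^2]=\mu(f^2)$ from Proposition \ref{MomComp}; these are algebraically equivalent and both land on the same inequality $M_2(t)\le C_1(T)+C\int_0^t M_2(u)\,du$. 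The one place where you go beyond the paper is the final step: you are right that Gronwall requires $M_2$ to be a priori locally bounded (or at least locally integrable), which is part of what is being proved, and your generation--truncation scheme ($Z_n\uparrow Z$, the same inequality at each level with a constant uniform in $n$, then monotone convergence) is a clean and standard way to discharge that point; the paper's proof leaves this implicit. So your argument is a valid, slightly more careful variant of the same proof.
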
 
\begin{proof}
From \eqref{eqZt3}, we have 
 \begin{align*}
\E([Z(t)]^2) \le 3 \left[ \P(\eta >t)+t\int_0^t M_1^2(t-s) \L^2\E(b^2(s)) ds  + \E\left(\int_0^t \int_0^{b(s)} \int_{\D} \sum_{i=1}^\ell z_i(t-s)  \bar{\mathcal{M}}(ds,du,d\z)\right)^2 \right],
\end{align*} 
However, from Proposition \ref{MomComp} below, we have that 
\[\E\left(\int_0^t \int_0^{b(s)} \int_{\D}\sum_{i=1}^\ell z_i(t-s)  \bar{\mathcal{M}}(ds,du,d\z)\right)^2= 
\int_0^t  \bar{b}(s)\{\L\E[(Z(t-s))^2]+M_1^2(t-s)[\E(L^2)-\L]\} ds.\]
Hence the result from Gronwall's Lemma. 
\end{proof}

The next statement follows readily from the usual central limit theorem and easy computations.
\begin{proposition}\label{fini-dim}
Suppose that Assumption ${(\bf H_2)}$ is satisfied. Then the final dimensional distributions of \\
$(\mathcal{U}^{N,x}_1,\mathcal{U}^{N,x}_2,\mathcal{U}^{N,x}_3)$ converge as $N\rightarrow +\infty$, to those of the centered Gaussian random vector\\ $(\mathcal{U}_1,\mathcal{U}_2,\mathcal{U}_3)$, whose covariances are given as follows
 \begin{align*}
\mathbb{C}ov(\mathcal{U}_1(t),\mathcal{U}_1(s))&=F^c(t \vee s)-F^c(t)F^c(s)\\
\mathbb{C}ov(\mathcal{U}_2(t),\mathcal{U}_2(s))&=\int_0^t \int_0^s \L^2 M_1(t-r)M_1(s-u) \mathbb{C}ov\left( b(r), b(u) \right)dudr\\
\mathbb{C}ov(\mathcal{U}_3(t),\mathcal{U}_3(s))&=\int_0^{t\wedge s}\left\{\L \bar{b}(r) \E[Z(t-r)Z(s-r)]+\E[L^2-L]M_1(t-r)M_1(s-r)\right\}dr\\
\mathbb{C}ov(\mathcal{U}_1(t),\mathcal{U}_3(s))&= \mathbb{C}ov(\mathcal{U}_2(t),\mathcal{U}_3(s))=0,\
\mathbb{C}ov(\mathcal{U}_1(t),\mathcal{U}_2(s))= \int_0^s \L M_1(s-r) \mathbb{C}ov\left( b(r) ,\mathds{1}_{\eta >t} \right)dr.
\end{align*}
\end{proposition}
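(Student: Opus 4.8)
The plan is to recognize that, for any fixed finite set of times, each of the three processes is a normalized sum of independent, identically distributed, centered random variables, so that the statement is just the classical multivariate central limit theorem. For $k\ge 1$ I would set
\begin{align*}
V_1^k(t) &= \mathds{1}_{\eta_k > t} - F^c(t), \\
V_2^k(t) &= \int_0^t \big(b_k(r) - \bar{b}(r)\big)\,\L\, M_1(t-r)\, dr, \\
V_3^k(t) &= \int_0^t \int_0^{b_k(r)} \int_\D \sum_{i=1}^\ell z_i(t-r)\, \bar{\mathcal{M}}_k(dr, du, d\z),
\end{align*}
so that $\mathcal{U}_j^{N,x}(t) = [Nx]^{-1/2}\sum_{k=1}^{[Nx]} V_j^k(t)$ for $j=1,2,3$. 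The triples $(V_1^k,V_2^k,V_3^k)_{k\ge1}$ are i.i.d., and each component is centered: $V_1^k$ because $\E\,\mathds{1}_{\eta_k>t}=F^c(t)$, $V_2^k$ because $\E\, b_k(r)=\bar b(r)$, and $V_3^k$ because it is an integral against the compensated measure $\bar{\mathcal M}_k$.

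Fixing times $0\le t_1<\cdots<t_m$, I would invoke the Cramér--Wold device: it suffices to show that every linear combination $\sum_{i=1}^m\sum_{j=1}^3 a_{ij}\,\mathcal{U}_j^{N,x}(t_i)$ converges to a centered Gaussian with the variance dictated by the claimed covariances. Each such combination equals $[Nx]^{-1/2}\sum_{k=1}^{[Nx]} W_k$ with $W_k=\sum_{i,j}a_{ij}V_j^k(t_i)$ i.i.d.\ centered scalars, so the scalar Lindeberg--Lévy CLT applies as soon as $\E[W_k^2]<\infty$, that is, as soon as each $V_j^k(t)$ is square integrable. This is the only place the hypotheses are used: $V_1^k$ is bounded; by Cauchy--Schwarz $\E[V_2^k(t)^2]\le \L^2\,t\,\big(\sup_{[0,t]}M_1\big)^2\int_0^t\E[(b(r)-\bar b(r))^2]\,dr$, finite by $(\mathbf H_2)$ and \eqref{M1estim}; and $\E[V_3^k(t)^2]$ is finite by Proposition \ref{M3tC} together with the moment identity of Proposition \ref{MomComp}.

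It then remains to identify each limiting covariance with that of a single summand, $\mathbb{C}ov(\mathcal{U}_i(t),\mathcal{U}_j(s))=\E[V_i^1(t)V_j^1(s)]$ (the cross terms $k\ne\ell$ vanish by independence). The entries involving only $V_1$ and $V_2$ are immediate: $\E[V_1^1(t)V_1^1(s)]=\E\,\mathds{1}_{\eta>t\vee s}-F^c(t)F^c(s)=F^c(t\vee s)-F^c(t)F^c(s)$; $\E[V_2^1(t)V_2^1(s)]$ follows by Fubini, pulling the double integral out of the expectation; and $\E[V_1^1(t)V_2^1(s)]$ reduces to $\int_0^s\L M_1(s-r)\,\mathbb{C}ov(b(r),\mathds{1}_{\eta>t})\,dr$. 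For the two cross terms with $V_3$ I would condition on the pair $(b,\eta)$: since $\mathcal{M}$ is independent of $(b,\eta)$ and $V_3^1(s)$ is an integral against a compensated PRM whose integrand is a deterministic function of the integration variables once $b$ is fixed, one has $\E[V_3^1(s)\mid b,\eta]=0$, which forces $\mathbb{C}ov(\mathcal{U}_1,\mathcal{U}_3)=\mathbb{C}ov(\mathcal{U}_2,\mathcal{U}_3)=0$.

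The only genuinely computational point, and the step I expect to be the main obstacle, is $\E[V_3^1(t)V_3^1(s)]$. Here I would first condition on $b$ and apply the second-moment formula of Proposition \ref{MomComp} to the product of the two compensated integrals; the $u$-integration contributes a factor $b(r)$, and after taking expectation in $b$ one is left with $\int_0^{t\wedge s}\bar b(r)\int_\D\big(\sum_{i=1}^\ell z_i(t-r)\big)\big(\sum_{j=1}^\ell z_j(s-r)\big)\Pi(d\z)\,dr$, the range reducing to $[0,t\wedge s]$ since the $z$'s vanish for negative argument. The inner $\Pi$-integral then splits into diagonal terms $i=j$, contributing $\L\,\E[Z(t-r)Z(s-r)]$ by the i.i.d.\ structure of the $Z_i$ under $\Pi$, and off-diagonal terms $i\ne j$, contributing $\E[L^2-L]\,M_1(t-r)M_1(s-r)$ by independence; this yields $\int_0^{t\wedge s}\bar b(r)\{\L\,\E[Z(t-r)Z(s-r)]+\E[L^2-L]\,M_1(t-r)M_1(s-r)\}\,dr$, in agreement with the second-moment formula used in the proof of Proposition \ref{M3tC} and giving the stated covariance of $\mathcal{U}_3$.
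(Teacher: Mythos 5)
Your proof is correct and is precisely the argument the paper has in mind: the paper merely asserts that the proposition ``follows readily from the usual central limit theorem and easy computations,'' and your i.i.d.\ decomposition, Cram\'er--Wold reduction, conditioning on $(b,\eta)$ for the cross terms with $\mathcal{U}_3$, and the $\Pi$-integral split into diagonal and off-diagonal terms supply exactly those computations. One small point: your (correct) formula for $\mathbb{C}ov(\mathcal{U}_3(t),\mathcal{U}_3(s))$ has $\bar{b}(r)$ multiplying both terms of the integrand, consistent with the second-moment identity used in the proof of Proposition \ref{M3tC}, whereas the paper's displayed statement appears to have accidentally dropped the factor $\bar{b}(r)$ from the term $\E[L^2-L]M_1(t-r)M_1(s-r)$.
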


For the rest of this subsection, we shall assume that the random function $b$ satisfies the following hypothesis.
\begin{equation*}
{(\bf H_3)}  :  \quad \E[L^4]<\infty\ \text{ and for any} \ T>0, \  \sup_{0<t<T} \E(b^4(t))<\infty.
\end{equation*} 
We shall need
 \begin{proposition}\label{M4tC}
Suppose that Assumption ${(\bf H_3)}$ is satisfied. For any $T>0$, there exists $C$ such that 
 \begin{align*}
\sup_{0<t<T} \E([Z(t)]^4)\le C.
\end{align*} 
  \end{proposition}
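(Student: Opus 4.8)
The plan is to mimic the proof of Proposition \ref{M3tC}, replacing the second-moment bound by a fourth-moment bound, and to close the argument again with Gronwall's Lemma. Starting from the representation \eqref{eqZt3}, I would write $Z(t)$ as the sum of three terms — the indicator $\mathds{1}_{\eta>t}$, the deterministic-in-law drift $\int_0^t \L M_1(t-s)b(s)ds$, and the compensated stochastic integral $I(t):=\int_0^t\int_0^{b(s)}\int_\D \sum_{i=1}^\ell z_i(t-s)\bar{\mathcal{M}}(ds,du,d\z)$ — and use the elementary inequality $(a+b+c)^4\le 27(a^4+b^4+c^4)$ to reduce to bounding the fourth moment of each term separately. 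The first term is bounded by $1$. For the second term, by Jensen's (or Cauchy--Schwarz) inequality applied to the probability-normalized measure on $[0,t]$ together with Assumption $(\bf H_3)$ and the estimate \eqref{M1estim}, one gets $\E\big(\int_0^t \L M_1(t-s)b(s)ds\big)^4\le t^3\L^4\int_0^t M_1^4(t-s)\E(b^4(s))ds\le C_T$, uniformly for $t\le T$.

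The substantive step is the fourth moment of the stochastic integral $I(t)$. Here I would invoke the moment formula for integrals against a compensated PRM that the authors announce in Section 4 (the analogue, at fourth order, of the second-moment identity used in Proposition \ref{M3tC} and attributed there to Proposition \ref{MomComp}). Expanding $\E[I(t)^4]$ via that formula produces a finite sum of terms, each of which is a time integral over $[0,t]$ of the product of $\bar b(s)$ (or higher moments of $b(s)$, controlled by $(\bf H_3)$), combinatorial factors involving moments of $L$ up to $\E[L^4]$ (finite by $(\bf H_3)$), and expectations of products of up to four copies of $Z(t-s)$ at a common shifted time. The key structural feature, exactly as at second order, is that the only term carrying the top-order moment $\E[Z(\cdot)^4]$ appears linearly and in the form $\int_0^t \L\,\bar b(s)\,\E[Z(t-s)^4]\,ds$; all remaining terms involve only lower moments $\E[Z(\cdot)^j]$ with $j\le 3$, which are already known to be uniformly bounded on $[0,T]$ by Proposition \ref{M3tC} together with the trivial bounds $\E[Z^j]\le 1+\E[Z^2]$ for $j\le 2$ via monotonicity, and which can be controlled at the intermediate third order by an analogous, strictly easier, auxiliary estimate.

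Granting the moment formula, the proof then closes mechanically. Collecting the three pieces gives an inequality of the form
\begin{equation*}
\E[Z(t)^4]\le C_T + C\int_0^t \bar b(s)\,\E[Z(t-s)^4]\,ds,\qquad 0\le t\le T,
\end{equation*}
where $C_T$ absorbs all lower-order contributions (finite by the preceding propositions and $(\bf H_3)$) and $\sup_{0<s<T}\bar b(s)<\infty$ by $(\bf H_1)$, which is implied by $(\bf H_3)$. Setting $g(t)=\E[Z(t)^4]$, this is precisely the hypothesis of Gronwall's Lemma, yielding $\sup_{0<t<T}g(t)\le C$ and hence the claim.

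The main obstacle I anticipate is the bookkeeping in expanding $\E[I(t)^4]$: one must correctly enumerate the partitions of four integration points against the compensated measure $\bar{\mathcal{M}}$, keep track of which partitions collapse integration variables (producing factors of $\bar b$, $\E(b^2)$, etc., and the various moments of $L$), and verify that every resulting term other than the single linear $\E[Z^4]$ term is controlled by the already-established lower-order bounds. This is exactly the computation the authors defer to Section 4, so I would state the needed fourth-order moment identity as a lemma there and cite it here, mirroring the role played by Proposition \ref{MomComp} in the proof of Proposition \ref{M3tC}.
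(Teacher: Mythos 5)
Your proposal is correct and follows essentially the same route as the paper: the same three-term decomposition of \eqref{eqZt3} with the elementary inequality $(a+b+c)^4\le 27(a^4+b^4+c^4)$, the fourth-moment identity for compensated PRM integrals (the paper's Proposition \ref{MomComp}, which gives $\E[(\bar{\mathbf{Q}}(f))^4]=\mu(f^4)+3[\mu(f^2)]^2$), the factorization $\E[|\sum_{i=1}^L Z_i|^4]\le\E[L^4]\E[Z^4]$, and Gronwall's Lemma to close. The only cosmetic difference is that you anticipate possible third-moment terms requiring an auxiliary estimate, whereas the identity in fact produces only the linear $\E[Z^4]$ term plus a squared second-moment term already controlled by Proposition \ref{M3tC}.
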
 
\begin{proof}
From \eqref{eqZt3}, we have 
  \begin{align*}
\E([Z(t)]^4) \le 3^3 \left[ \P(\eta >t)+t^3 \L^4\int_0^t M_1^4(t-s) \E(b^4(s)) ds  + \E\left(\int_0^t \int_0^{b(s)} \int_{\D}\sum_{i=1}^\ell z_i(t-s)  \bar{\mathcal{M}}(ds,du,d\z)\right)^4 \right],
\end{align*} 
However, from Proposition \ref{MomComp} below, we have that 
\begin{align*}
\E\left(\int_0^t \int_0^{b(s)} \int_{\D}\sum_{i=1}^\ell z_i(t-s)  \bar{\mathcal{M}}(ds,du,d\z)\right)^4&= \int_0^t  \bar{b}(s)\E\left[\left|\sum_{i=1}^LZ_i(t-s)\right|^4\right] ds+ 3 \left( \int_0^t  \bar{b}(s)\E\left[\left|\sum_{i=1}^LZ_i(t-s)\right|^2\right] ds \right)^2\\
&\le  \int_0^t  \bar{b}(s)\E[L^4]\E[|Z(t-s)|^4]ds+3\left( \int_0^t  \bar{b}(s) \E[L^2]\E[|Z(t-s)|^2]ds \right)^2.
\end{align*}
The rest of the proof exploits Proposition \ref{M3tC} and Gronwall's Lemma. 
\end{proof}

Our assumption concerning the d.f. $F$ of the r.v. $\eta$ will be 
\begin{equation*}
{(\bf H_4)}  :  \quad \mbox{For any}\ T>0, \ \mbox{there exists} \ \alpha \in ]0,1] \ \mbox{and a constant} \ C>0 \ \mbox{such that} 
\end{equation*} 
\[ F(t)-F(s) \le C |t-s|^{\alpha}, \quad \forall 0\le s<t\le T.\]
We have the second main result of this section
 \begin{theor}\label{TCL} Suppose that Assumptions ${(\bf H_3)}$ and ${(\bf H_4)}$ are satisfied. Then, as $N \rightarrow \infty$, 
$(\mathcal{R}^{N,x}(t), t\ge 0)$ converges to $(\mathcal{R}^{x}(t), t\ge 0)$ in distribution in $D$, where $\mathcal{R}^{x}$ is given by
\begin{equation*}
 \mathcal{R}^{x}(t)=\sqrt{x} \left( \mathcal{U}_1(t)+\mathcal{U}_2(t)+\mathcal{U}_3(t) \right), \quad \mbox{with}
\end{equation*}
$(\mathcal{U}_1, \mathcal{U}_2, \mathcal{U}_3)$ is the Gaussian random element of $C([0,\infty))^3$ whose law is specified in Proposition \ref{fini-dim}. 
%
 \end{theor}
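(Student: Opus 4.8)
The plan is to combine the finite-dimensional convergence already recorded in Proposition \ref{fini-dim} with tightness in the Skorohod space, and to derive the latter from Hahn's criterion \cite{MH}. First I would dispose of the two elementary terms in the decomposition \eqref{eqRNxt1}: since $[Nx]-Nx$ is bounded and $M_1$ is locally bounded by \eqref{M1estim}, the remainder $\varepsilon^{N,x}$ tends to $0$ uniformly on compact time intervals, hence in $D$, while $\sqrt{[Nx]/N}\to\sqrt{x}$. It therefore suffices to prove that the triple $(\mathcal{U}_1^{N,x},\mathcal{U}_2^{N,x},\mathcal{U}_3^{N,x})$ converges in distribution in $D^3$ to the continuous Gaussian element $(\mathcal{U}_1,\mathcal{U}_2,\mathcal{U}_3)$; since addition is continuous on $D^3$ at triples of continuous functions, the continuous mapping theorem then gives the stated convergence of $\mathcal{R}^{N,x}$.

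Given the joint finite-dimensional convergence of Proposition \ref{fini-dim}, what remains is tightness of the vector, and this reduces to tightness of each of the three coordinate sequences separately. Each coordinate is a normalized sum $[Nx]^{-1/2}\sum_{k=1}^{[Nx]}\zeta_k$ of i.i.d. centered c\`adl\`ag processes, so I would apply Hahn's sufficient condition to each one. Because the summands are i.i.d. and centered, the relevant adjacent-increment moment of the normalized sum factorizes, for $s\le t\le u$, into a product of two single-summand variances, plus a diagonal term controlled by the second moment of the product of the two increments, $\E[(\zeta(t)-\zeta(s))^2(\zeta(u)-\zeta(t))^2]$. The crucial point is that working with adjacent intervals converts a merely H\"older control of one increment into a bound of the form $C\,(G(u)-G(s))^2$ with $G$ continuous and nondecreasing, through the elementary inequality $ab\le\tfrac14(a+b)^2$; the resulting exponent $2>1$ then yields tightness for every $\alpha\in(0,1]$ in $(\mathbf{H_4})$.

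For $\mathcal{U}_2^{N,x}$, which has continuous paths, the increments are smooth integrals of $b-\bar{b}$ against $\L M_1$, and the required moment bounds follow directly from $(\mathbf{H_3})$ and the local boundedness of $M_1$. For $\mathcal{U}_1^{N,x}$, the increment of a single summand is $\mathds{1}_{s<\eta\le t}-(F(t)-F(s))$, so its second moment is dominated by $F(t)-F(s)$ and the adjacent-increment quantity by $C(F(t)-F(s))(F(u)-F(t))\le \tfrac{C}{4}(F(u)-F(s))^2$; here Assumption $(\mathbf{H_4})$ serves precisely to guarantee that $G=C'F$ is continuous, as Hahn's criterion demands.

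The main obstacle is the third coordinate $\mathcal{U}_3^{N,x}$, which carries the fluctuations of the Poisson random measure. Here the increment $\zeta_3(t)-\zeta_3(s)$ is not a stochastic integral over $[s,t]$ alone, since the integrand $\sum_i z_i(\cdot-r)$ depends on the terminal time; splitting it produces both an integral over $[s,t]$ and an integral over $[0,s]$ of the $Z$-process increments $z_i(t-r)-z_i(s-r)$. Consequently the verification of Hahn's condition hinges on an explicit formula for the second moment of the product of two integrals against the compensated measure $\bar{\mathcal{M}}$, which is the content of Proposition \ref{MomComp}, combined with the uniform fourth-moment bound of Proposition \ref{M4tC}. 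Assembling these estimates so as to produce a bound of the form $C(G_3(u)-G_3(s))^2$ with $G_3$ continuous is the technical heart of the argument, and is exactly what is carried out in Section 4.
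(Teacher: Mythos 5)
Your proposal follows essentially the same route as the paper: dispose of $\varepsilon^{N,x}$ via \eqref{M1estim} and of the prefactor $\sqrt{[Nx]/N}\to\sqrt{x}$, combine the finite-dimensional convergence of Proposition \ref{fini-dim} with coordinate-wise tightness of $(\mathcal{U}_1^{N,x},\mathcal{U}_2^{N,x},\mathcal{U}_3^{N,x})$, use continuity of addition at continuous limit points, and establish the hard case $\mathcal{U}_3^{N,x}$ through Hahn's criterion together with the moment formulas of Proposition \ref{MomComp} and the fourth-moment bound of Proposition \ref{M4tC}. The only differences are cosmetic: the paper treats $\mathcal{U}_1^{N,x}$ by Billingsley's Theorem 14.3 and $\mathcal{U}_2^{N,x}$ by a Hilbert-space CLT followed by a continuous linear map, rather than by Hahn's criterion (both work), and for $\mathcal{U}_3$ the attainable exponent in the adjacent-increment condition is $1+\alpha/2$ as in Proposition \ref{HanCondi}, not the $2$ you announce --- since $Z$ has jumps, $\E[(Z(t)-Z(v))^4]$ is only of order $(t-v)^{\alpha}$ --- but any exponent strictly greater than $1$ suffices for Hahn's theorem.
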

To prove the Theorem, we will need some preliminary results. The following is Theorem 14.3 in \cite{Bill}.
\begin{lemma}\label{lemtigU1N}
As $N \rightarrow \infty$, 
$(\mathcal{U}_1^{N,x}(t), t\ge 0)$ converges to $(\mathcal{U}_1(t), t\ge 0)$ in distribution in $D$.
\end{lemma}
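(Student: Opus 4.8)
The plan is to recognize that $\mathcal{U}_1^{N,x}$ is, up to sign, nothing but the classical empirical process attached to the i.i.d. sample $\eta_1,\dots,\eta_{[Nx]}$, whose functional weak convergence in $D$ is precisely the content of Theorem~14.3 in \cite{Bill}. The whole argument then reduces to identifying this structure and invoking that theorem along the deterministic subsequence $n=[Nx]$, which tends to $\infty$ as $N\to\infty$.

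First I would fix $x$, set $n=[Nx]$, and rewrite
\[
\mathcal{U}_1^{N,x}(t)=\sqrt{n}\,\big(\widehat G_n(t)-F^c(t)\big),\qquad \widehat G_n(t)=\frac1n\sum_{k=1}^n \mathds{1}_{\eta_k>t}.
\]
Since $\widehat G_n=1-\widehat F_n$, where $\widehat F_n$ is the empirical cumulative distribution function of the $\eta_k$, one has $\mathcal{U}_1^{N,x}(t)=-\sqrt n\,\big(\widehat F_n(t)-F(t)\big)$, i.e. $\mathcal{U}_1^{N,x}$ is (minus) the empirical process of the sample. The finite-dimensional convergence to a centred Gaussian field with covariance $F^c(t\vee s)-F^c(t)F^c(s)$ is then immediate from the multivariate central limit theorem, using $\mathbb{C}\mathrm{ov}(\mathds{1}_{\eta>t},\mathds{1}_{\eta>s})=\P(\eta>t\vee s)-F^c(t)F^c(s)$; this is exactly the $\mathcal{U}_1$ part of Proposition~\ref{fini-dim}, so it requires no new computation here.

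The genuine content is tightness in $D$, and this is precisely what the cited theorem delivers. Should a self-contained route be preferred, I would verify the moment criterion of Chapter~3 of \cite{Bill}, estimating, for $r\le t\le s$,
\[
\E\Big[\big(\mathcal{U}_1^{N,x}(t)-\mathcal{U}_1^{N,x}(r)\big)^2\big(\mathcal{U}_1^{N,x}(s)-\mathcal{U}_1^{N,x}(t)\big)^2\Big]\le C\,\big(F(s)-F(r)\big)^2,
\]
which is the standard fourth-moment bound for indicator increments; combined with the fidi convergence this yields relative compactness and hence weak convergence. Under $(\bf H_4)$ the bound $F(t)-F(s)\le C|t-s|^\alpha$ forces $F$, and thus the limiting covariance, to be continuous, so the limit $\mathcal{U}_1$ lives in $C([0,\infty))$ as claimed in Theorem~\ref{TCL}.

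I expect the only delicate point to be tightness, but since $\mathcal{U}_1^{N,x}$ coincides with a classical empirical process this is handled wholesale by Theorem~14.3 of \cite{Bill}; the substitution of the random-looking index $[Nx]$ for a plain integer $n$ causes no difficulty because $[Nx]$ is deterministic for each $N$ and diverges. The conclusion of Lemma~\ref{lemtigU1N} follows.
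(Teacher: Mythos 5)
Your proposal is correct and matches the paper exactly: the paper gives no proof beyond observing that the statement \emph{is} Theorem~14.3 of \cite{Bill} (Donsker's theorem for the empirical process), which is precisely the identification you make, with $\mathcal{U}_1^{N,x}=-\sqrt{[Nx]}\,(\widehat F_{[Nx]}-F)$ and $[Nx]\to\infty$ deterministically. The additional remarks on finite-dimensional convergence and the fourth-moment tightness criterion are sound but not needed.
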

We shall need the 
\begin{lemma}\label{lemtigU2N}
Suppose that Assumption ${(\bf H_2)}$ is satisfied. Then, as $N \rightarrow \infty$, $(\mathcal{U}_2^{N,x}(t), t\ge 0)$ converges to $(\mathcal{U}_2(t), t\ge 0)$ in distribution in $C([0,+\infty))$.
\end{lemma}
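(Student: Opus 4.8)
The plan is to prove convergence in $C([0,\infty))$ by the standard two-step route: identify the finite-dimensional limits, which are already given in Proposition \ref{fini-dim}, and then prove tightness of the sequence $(\mathcal{U}_2^{N,x})_N$ in $C([0,\infty))$. Since the limit lives in $C$ rather than just $D$, and since each sample path
\[
\mathcal{U}_2^{N,x}(t)=\frac{1}{\sqrt{[Nx]}}\sum_{k=1}^{[Nx]}\int_0^t\big(b_k(s)-\bar b(s)\big)\,\L\,M_1(t-s)\,ds
\]
is itself continuous in $t$ (the integrand is integrable on $[0,T]$ under ${(\bf H_2)}$, and $t\mapsto M_1(t)$ is bounded on compacts by \eqref{M1estim}), I would work directly with the Kolmogorov--Chentsov moment criterion for tightness in $C([0,T])$ for each fixed $T$, then patch the $T$'s together in the usual way.

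First I would set $Y_k(t)=\int_0^t (b_k(s)-\bar b(s))\,\L\,M_1(t-s)\,ds$, so that $\mathcal{U}_2^{N,x}(t)=[Nx]^{-1/2}\sum_{k=1}^{[Nx]}Y_k(t)$ is a normalized sum of i.i.d.\ centered continuous processes. The key estimate is a bound on the increment second moment of the form
\[
\E\big(\mathcal{U}_2^{N,x}(t)-\mathcal{U}_2^{N,x}(s)\big)^2=\E\big(Y_1(t)-Y_1(s)\big)^2\le C\,|t-s|^2,
\]
uniformly in $N$, for $0\le s<t\le T$. To get this I would split the increment $Y_1(t)-Y_1(s)$ into the two natural pieces coming from the difference of upper limits of integration, namely $\int_s^t(b_1(r)-\bar b(r))\L M_1(t-r)\,dr$ and $\int_0^s(b_1(r)-\bar b(r))\L\,[M_1(t-r)-M_1(s-r)]\,dr$. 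For the first piece Jensen/Cauchy--Schwarz together with $\sup_{[0,T]}\E(b^2(r))<\infty$ from ${(\bf H_2)}$ and $\sup_{[0,T]}M_1<\infty$ gives a bound of order $|t-s|^2$; for the second piece I would use the local Lipschitz continuity of $M_1$, which follows from the renewal equation \eqref{eqMt1} since $t\mapsto\int_0^t \L M_1(t-r)\bar b(r)\,dr$ is Lipschitz on compacts (its derivative is controlled by $\sup_{[0,T]}M_1\cdot\sup_{[0,T]}\bar b$) and $F^c$ is of bounded variation—so the smooth part is Lipschitz and the only possible non-Lipschitz contribution comes from $F^c$, which I would handle by the same Cauchy--Schwarz bound as the first piece.

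Because the increment moment bound carries the exponent $2>1$ on $|t-s|$, the Kolmogorov--Chentsov criterion (Theorem 12.3 in \cite{Bill}) applies directly and yields tightness of $(\mathcal{U}_2^{N,x})_N$ in $C([0,T])$; combined with the convergence of finite-dimensional distributions from Proposition \ref{fini-dim} and the continuity of the Gaussian limit $\mathcal{U}_2$, this gives convergence in distribution in $C([0,T])$ for every $T$, hence in $C([0,\infty))$. The main obstacle, and the step I would be most careful about, is the regularity of $M_1$ entering the second piece: one needs that $t\mapsto M_1(t)$ is Lipschitz (or at least Hölder of exponent matching what the $F^c$ term can deliver) on $[0,T]$, which is where Assumption ${(\bf H_4)}$ on $F$ would naturally be invoked if $F^c$ fails to be Lipschitz. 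Everything else is a routine application of moment bounds for i.i.d.\ sums under ${(\bf H_2)}$, so the continuity of the limit and the square-power increment bound together make the tightness essentially automatic.
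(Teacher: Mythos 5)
Your route (finite-dimensional convergence from Proposition \ref{fini-dim} plus tightness in $C([0,T])$ via a Kolmogorov--Chentsov moment bound) is genuinely different from the paper's, and as written it has a gap at exactly the step you single out as delicate. The bound $\E\big(Y_1(t)-Y_1(s)\big)^2\le C|t-s|^2$ (or any exponent $>1$) requires
\[
\int_0^s\big[M_1(t-r)-M_1(s-r)\big]^2\,dr\le C|t-s|^{1+\epsilon}\quad\text{for some }\epsilon>0,
\]
and this is false under ${(\bf H_2)}$ alone. Assumption ${(\bf H_2)}$ imposes no continuity on $F$: if $\eta$ has an atom at $t_0$, then $F^c$, and hence $M_1$ by \eqref{eqMt1}, jumps at $t_0$, and taking $F={\bf 1}_{[t_0,\infty)}$ one computes $\int_0^s[F^c(t-r)-F^c(s-r)]^2\,dr=\big|(s-t_0,\,t-t_0]\cap[0,s]\big|$, which is exactly of order $t-s$. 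So the ``same Cauchy--Schwarz bound as the first piece'' cannot rescue the $F^c$ contribution, and the moment criterion does not apply. You half-acknowledge this by saying ${(\bf H_4)}$ would be invoked, but ${(\bf H_4)}$ is not a hypothesis of this lemma; and even granting it, Lemma \ref{ecartEZ} and Remark \ref{MajorG} give exponent $1+\alpha$, not $2$. As it stands your argument proves the statement only under the extra hypothesis ${(\bf H_4)}$ (which, to be fair, is in force where the lemma is actually used in Theorem \ref{TCL}), and with the exponent corrected to $1+\alpha$.

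The paper sidesteps the regularity of $M_1$ entirely: it writes $\mathcal{U}_2^{N,x}=\Phi(V^{N,x})$ with $V^{N,x}=[Nx]^{-1/2}\sum_{k}(b_k-\bar b)$ and $\Phi(f)=\int_0^\bullet f(s)\,\L\,M_1(\bullet-s)\,ds$, invokes the Hilbert-space CLT to get $V^{N,x}\Rightarrow V$ in $L^2([0,T])$ under ${(\bf H_2)}$, and concludes by the continuous mapping theorem, $\Phi$ being a bounded linear map from $L^2([0,T])$ into $C([0,T])$ (continuity of $\Phi(f)$ needs only continuity of translation in $L^2$, no modulus for $M_1$). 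If you want to keep a tightness-based argument without ${(\bf H_4)}$, replace the moment criterion by the pathwise modulus bound
\[
\big|\mathcal{U}_2^{N,x}(t)-\mathcal{U}_2^{N,x}(s)\big|\le \L\,\big\|V^{N,x}\big\|_{L^2([0,T])}\Big((t-s)^{1/2}\sup_{[0,T]}M_1+\rho(t-s)\Big),
\]
where $\rho(\delta)^2=\sup_{|t-s|\le\delta}\int_0^s[M_1(t-r)-M_1(s-r)]^2\,dr\to0$ deterministically as $\delta\to0$, combined with tightness of $\|V^{N,x}\|_{L^2([0,T])}$; this needs no rate for $\rho$ and hence no assumption beyond ${(\bf H_2)}$.
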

\begin{proof}
Let us rewrite $\mathcal{U}_2^{N,x}(t)$ in the form 
 \begin{align*}
\mathcal{U}_2^{N,x}(t)= \int_0^t \L\, V^{N,x}(s) M_1(t-s)ds,\quad \mbox{where} \quad V^{N,x}(t)&=  \frac{1}{\sqrt{[Nx]}} \sum_{k=1}^{[Nx]} \left(b_k(t) - \bar{b}(t)\right).
\end{align*}
From the theorem in \cite{KNSV}  (see also Lemma 1.6 of \cite{PRY}), we deduce that $(V^{N,x}(t), t\ge 0)$ converges to $(V(t), t\ge 0)$ in $L^2([0,\infty))$, where $V$ is the centered Gaussian random element of $C([0,\infty))$ specified by $\E(V(t)V(s))=\mathbb{C}ov(b(t)b(s))$, for $s,t>0$. Now, for $T>0$, let us define the linear map $\Phi$ from $L^2([0,T])$ into $C([0,T])$ by
 \begin{equation*}
 \Phi(f)=\int_0^{\bullet} f(s) \L\, M_1(\bullet- s)ds.
 \end{equation*} 
It remains to show that $\Phi$ is continuous, whose verification is left to the reader. 
 \end{proof}
 
 We need the following result, whose proof is postponed to the appendix. 
 \begin{lemma}\label{lemtigU3N}
The sequence $(\mathcal{U}_3^{N,x}, N\ge 1)$ is tight in $D$ and any limit of a subsequence belongs $a.s.$ to $C([0,\infty))$.
\end{lemma}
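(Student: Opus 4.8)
The plan is to establish tightness of $(\mathcal{U}_3^{N,x}, N\ge 1)$ in $D$ by applying the sufficient condition of M.~Hahn \cite{MH}, since the finite-dimensional convergence to the Gaussian limit $\mathcal{U}_3$ is already covered by Proposition \ref{fini-dim}. The process $\mathcal{U}_3^{N,x}$ is a normalized sum of $[Nx]$ i.i.d.\ centered terms, each being a stochastic integral against a compensated PRM $\bar{\mathcal{M}}_k$. The key structural feature I would exploit is that, thanks to the compensation, these summands are martingale-like objects whose increments over a time interval $(s,t]$ can be controlled through explicit moment formulas. The heart of the argument is therefore a moment estimate for increments: I would show that for some exponent and some control function,
\begin{equation*}
\E\big[|\mathcal{U}_3^{N,x}(t)-\mathcal{U}_3^{N,x}(s)|^{p}\big] \le C\, |G(t)-G(s)|^{q},
\end{equation*}
with $G$ nondecreasing and continuous (or at worst with controlled jumps), which is exactly the shape of hypothesis required by Hahn's criterion to conclude tightness in $D$ together with continuity of the limit.

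First I would decompose the increment $\mathcal{U}_3^{N,x}(t)-\mathcal{U}_3^{N,x}(s)$ into the part of the integral over the new time slab $(s,t]$ and the part over $[0,s]$ where only the integrand $z_i(\cdot)$ is evaluated at a shifted argument. Next, because each summand is centered and the summands are independent across $k$, the second moment of the sum reduces to $[Nx]^{-1}\cdot[Nx] = 1$ times the second moment of a single increment, so the $1/\sqrt{[Nx]}$ normalization is exactly right to keep variances bounded. I would then invoke Proposition \ref{MomComp} (the moment formula for integrals against a compensated PRM, announced in the excerpt) to write the relevant second and fourth moments of a single increment explicitly in terms of $\bar b$, $M_1$, and the moments $\E[Z(\cdot)^2]$, $\E[Z(\cdot)^4]$. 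The uniform bounds furnished by Proposition \ref{M3tC} and Proposition \ref{M4tC}, valid under $(\bf H_2)$ and $(\bf H_3)$, then let me bound these moment expressions by integrals of the form $\int_s^t(\cdots)dr$ plus cross terms, each dominated by quantities of the order $|t-s|$ or $|F(t)-F(s)|$. This is where hypothesis $(\bf H_4)$ enters, supplying the Hölder control $F(t)-F(s)\le C|t-s|^\alpha$ that converts the survival-function contributions into a genuine modulus of continuity.

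The main obstacle, and the reason the fourth-moment bound of Proposition \ref{M4tC} is needed rather than only the second, is that Hahn's criterion for the CLT in $D$ requires a moment estimate of the increments strong enough to rule out oscillations, typically of the Billingsley--Kolmogorov type involving a product of two increments (left and right of a point) with a combined exponent strictly exceeding one. Verifying that the explicit fourth-moment formula for the single-summand increment, after summation and normalization, produces a bound of the form $C|G(t)-G(r)|\,|G(r)-G(s)|$ for $s<r<t$ with $G$ continuous is the delicate computation; the cross terms arising from the $\E[L^4]$ and $\E[L^2]^2$ contributions in Proposition \ref{M4tC} must be shown to respect this factorized structure. Once this increment estimate is in place, Hahn's theorem yields tightness, and the fact that every limiting moment bound is controlled by a \emph{continuous} $G$ forces any subsequential limit to have no fixed jumps, hence to lie almost surely in $C([0,\infty))$, completing the proof.
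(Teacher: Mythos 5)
Your overall strategy is the paper's: reduce the lemma to Hahn's criterion for the CLT in $D$, use the explicit moment formulas for integrals against a compensated PRM (Proposition \ref{MomComp}), and use ${(\bf H_4)}$ to obtain a combined exponent strictly larger than one in the product-of-increments condition. One point of precision: Hahn's Theorem 2 imposes the two moment conditions on a \emph{single summand} $U(t)=\int_0^t\int_0^{b(r)}\int_\D\sum_{i=1}^\ell z_i(t-r)\,\bar{\mathcal{M}}(dr,du,d\z)$, namely $\E[|U(t)-U(s)|^2]\le C(t-s)$ and $\E[|U(t)-U(v)|^2|U(v)-U(s)|^2]\le C(t-s)^{1+\alpha/2}$ for $s<v<t$; you oscillate between stating the target estimate for $\mathcal{U}_3^{N,x}$ and for one summand. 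This is cosmetic, but you should fix it, since working with the summand is precisely what Hahn's theorem buys you.

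The genuine gap is in how you propose to prove the increment estimates. You plan to feed the uniform bounds $\sup_{t\le T}\E[Z(t)^2]\le C$ and $\sup_{t\le T}\E[Z(t)^4]\le C$ (Propositions \ref{M3tC} and \ref{M4tC}) into the PRM moment formulas, with ${(\bf H_4)}$ handling the survival-function terms. That is not enough. In the decomposition of $U(t)-U(s)$ that you describe, the contribution of the slab $[0,s]$ is $\int_0^s\int_0^{b(r)}\int_\D\sum_i[z_i(t-r)-z_i(s-r)]\,\bar{\mathcal{M}}(dr,du,d\z)$, whose second moment equals $\int_0^s\bar b(r)\,\E\bigl[\bigl|\sum_{i\le L}[Z_i(t-r)-Z_i(s-r)]\bigr|^2\bigr]dr$; you need this to be $O(t-s)$, which requires $L^2$ (and, for condition (ii), $L^4$ and mixed-product) estimates on the \emph{increments} of $Z$, not on its moments at fixed times. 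Since $Z$ is itself built from $U$ via \eqref{expressZ}, these increment estimates are self-referential: the paper establishes $|M_1(t)-M_1(s)|$, $\E|Z(t)-Z(s)|^2$, $\E|Z(t)-Z(s)|^4$, and finally $\E[|Z(t)-Z(v)|^2|Z(v)-Z(s)|^2]$, each by deriving an integral inequality in a shifted time variable and closing it with Gronwall's lemma, and it also needs the mixed identity $\E[(\bar{\mathbf{Q}}(f_1))^2(\bar{\mathbf{Q}}(f_2))^2]=\mu(f_1^2f_2^2)+\mu(f_1^2)\mu(f_2^2)+2[\mu(f_1f_2)]^2$ applied conditionally on $b$, not merely the fourth moment of a single integral. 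Without this recursive Gronwall machinery your ``delicate computation'' cannot be completed even for condition (i), so this is the missing core of the argument rather than a detail to be checked.
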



 We have
 \begin{proposition}\label{somdesUN}
As $N\rightarrow +\infty$, $ (\mathcal{U}_1^{N,x}(t)+ \mathcal{U}_2^{N,x}(t)+ \mathcal{U}_3^{N,x}(t), t\ge 0) \Longrightarrow (\mathcal{U}_1(t)+ \mathcal{U}_2(t)+ \mathcal{U}_3(t), t\ge 0)$ in $D$.
\end{proposition}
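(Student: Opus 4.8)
The plan is to deduce the joint weak convergence of the triple $(\mathcal{U}_1^{N,x}, \mathcal{U}_2^{N,x}, \mathcal{U}_3^{N,x})$ in the product space $D^3$ and then to transfer this to the sum by means of the continuous mapping theorem. The delicate point is that addition is \emph{not} continuous on $D$ equipped with the Skorohod topology; what rescues the argument is that the three limiting processes are almost surely continuous, so no two of them can share a jump, and addition is continuous precisely at such triples.

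First I would establish tightness of the sequence $\left((\mathcal{U}_1^{N,x}, \mathcal{U}_2^{N,x}, \mathcal{U}_3^{N,x})\right)_{N\ge 1}$ in $D^3$. Since tightness in a finite product of Polish spaces is equivalent to tightness of each coordinate sequence, it suffices to treat the three marginals separately: $(\mathcal{U}_1^{N,x})$ converges in distribution in $D$ by Lemma \ref{lemtigU1N}, hence is tight; $(\mathcal{U}_2^{N,x})$ converges in $C([0,\infty))$ by Lemma \ref{lemtigU2N}, hence is tight in $C$ and a fortiori in $D$; and $(\mathcal{U}_3^{N,x})$ is tight in $D$ by Lemma \ref{lemtigU3N}.

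Next I would identify the joint limit. By Prohorov's theorem the tight sequence is relatively compact in $D^3$, so it is enough to show that every weak subsequential limit $(\tilde{\mathcal{U}}_1, \tilde{\mathcal{U}}_2, \tilde{\mathcal{U}}_3)$ coincides in law with $(\mathcal{U}_1, \mathcal{U}_2, \mathcal{U}_3)$. Each coordinate limit is almost surely continuous (for $\mathcal{U}_1$ this follows from the continuity of $F$ under ${(\bf H_4)}$, and for $\mathcal{U}_2$, $\mathcal{U}_3$ from Lemmas \ref{lemtigU2N} and \ref{lemtigU3N}), so $(\tilde{\mathcal{U}}_1, \tilde{\mathcal{U}}_2, \tilde{\mathcal{U}}_3)\in C([0,\infty))^3$ almost surely. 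Consequently, for any $0\le t_1<\cdots<t_k$ the evaluation map $\pi_{t_1,\ldots,t_k}\colon D^3\to\mathbb{R}^{3k}$ is almost surely continuous under the law of the limit, and combining the continuous mapping theorem with Proposition \ref{fini-dim} shows that the finite-dimensional distributions of the subsequential limit are exactly those of the centered Gaussian vector $(\mathcal{U}_1, \mathcal{U}_2, \mathcal{U}_3)$. Since a continuous process is determined by its finite-dimensional laws, the subsequential limit equals $(\mathcal{U}_1, \mathcal{U}_2, \mathcal{U}_3)$ in distribution, and as the subsequence was arbitrary, $(\mathcal{U}_1^{N,x}, \mathcal{U}_2^{N,x}, \mathcal{U}_3^{N,x}) \Longrightarrow (\mathcal{U}_1, \mathcal{U}_2, \mathcal{U}_3)$ in $D^3$.

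Finally I would conclude by applying the continuous mapping theorem to the addition map $\Sigma(f,g,h)=f+g+h$ from $D^3$ into $D$. Although $\Sigma$ fails to be continuous on all of $D^3$, it is continuous at every triple whose components have no common discontinuity, in particular at every triple in $C([0,\infty))^3$. As the limit $(\mathcal{U}_1, \mathcal{U}_2, \mathcal{U}_3)$ lies in $C([0,\infty))^3$ almost surely, $\Sigma$ is almost surely continuous at the limit, and the continuous mapping theorem yields $\mathcal{U}_1^{N,x}+\mathcal{U}_2^{N,x}+\mathcal{U}_3^{N,x} \Longrightarrow \mathcal{U}_1+\mathcal{U}_2+\mathcal{U}_3$ in $D$, as claimed. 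The only genuine obstacle is the discontinuity of addition in the Skorohod topology, which is exactly why the continuity of the limiting processes, secured by the preceding lemmas, is indispensable here.
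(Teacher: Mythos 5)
Your argument is correct and is essentially the paper's own proof, which is stated in two lines: tightness of the triple in $D^3$ from Lemmas \ref{lemtigU1N}, \ref{lemtigU2N} and \ref{lemtigU3N}, identification of the unique (a.s.\ continuous) limit via Proposition \ref{fini-dim}, and continuity of addition at continuous triples. You have simply spelled out the details the paper leaves implicit, including the key observation that addition fails to be continuous on $D^3$ but is continuous at elements of $C([0,\infty))^3$.
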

 \begin{proof}
 It follows from Lemmas \ref{lemtigU1N}, \ref{lemtigU2N} and \ref{lemtigU3N} that the sequence 
 $\{(\mathcal{U}_1^{N,x},\mathcal{U}_2^{N,x},\mathcal{U}_3^{N,x})\ N\ge1\}$ is tight in $D^3$, and we know that its unique limit is a.s. continuous. The result follows.
 \end{proof}

We can now turn to the

$\bf{Proof \ of \  Theorem}$  \ref{TCL} : Since thanks to \eqref{M1estim}, $\varepsilon^{N,x}(t)\to0$ locally uniformly in $t$, the result follows by combining \eqref{eqRNxt1} and Proposition \ref{somdesUN}.
$\hfill \blacksquare$

 \section{Appendix : Proof of Lemma \ref{lemtigU3N}}
 \subsection{Moment estimates}
\begin{proposition}\label{MomComp}
Let $(E, \mathcal{E})$ be a measurable space. Let $\mathbf{Q}$ be a Poisson random measure on $(E, \mathcal{E})$ with mean $\mu$, and $\bar{\mathbf{Q}}$ its associated 
compensated measure. If $f_i\in L^1(\mu) \cap L^4(\mu)$, 
\begin{align*}
\E[(\bar{\mathbf{Q}}(f))^2]=\mu(f^2) \quad \mbox{and} \quad \E[(\bar{\mathbf{Q}}(f))^4]&=\mu(f^4) + 3[\mu(f^2)]^2, 
\end{align*}
and if $f_i\in L^1(\mu) \cap L^4(\mu)$, $i=1,2$, 
 \begin{align*}
\E \left[(\bar{\mathbf{Q}}(f_1))^2\bar{\mathbf{Q}}(f_2)^2 \right]=\mu(f_1^2f_2^2)+\mu(f_1^2)\mu(f_2^2)+2[\mu(f_1f_2)]^2\,.
\end{align*}
\end{proposition}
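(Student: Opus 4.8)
The plan is to recognize all three identities as fourth‑order moment–cumulant formulas for integrals against a Poisson random measure, and to prove them by reducing to simple functions, where everything becomes an explicit computation with independent Poisson variables. Conceptually, the shortest route is through the Laplace functional: for suitable $g$ one has $\E[\exp(\mathbf{Q}(g))] = \exp(\int_E(e^g-1)\,d\mu)$, whence, after dividing by $e^{\mu(g)}$,
\[
\E\left[\exp\left(\bar{\mathbf{Q}}(g)\right)\right] = \exp\left(\int_E (e^{g}-1-g)\, d\mu\right).
\]
Taking $g=\theta_1 f_1+\theta_2 f_2$ and expanding the exponent in powers of $(\theta_1,\theta_2)$ identifies the joint cumulants of $(\bar{\mathbf{Q}}(f_1),\bar{\mathbf{Q}}(f_2))$: the first‑order cumulants vanish because of the compensation, while $\kappa_{m,n}=\mu(f_1^m f_2^n)$ for $m+n\ge2$; in particular $\bar{\mathbf{Q}}(f)$ has cumulants $\kappa_n=\mu(f^n)$ for $n\ge2$.

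Converting cumulants to moments gives the three formulas at once. For the centered variable $X=\bar{\mathbf{Q}}(f)$ one has $\E[X^2]=\kappa_2=\mu(f^2)$ and $\E[X^4]=\kappa_4+3\kappa_2^2=\mu(f^4)+3[\mu(f^2)]^2$. For the mixed moment I would apply the partition form of the moment–cumulant formula to $\E[X_1X_2X_3X_4]$ with $X_1=X_2=\bar{\mathbf{Q}}(f_1)$ and $X_3=X_4=\bar{\mathbf{Q}}(f_2)$: since every first cumulant is zero, only the set partitions of $\{1,2,3,4\}$ with no singleton block survive, namely the single block $\{1,2,3,4\}$ contributing $\kappa_{2,2}=\mu(f_1^2f_2^2)$, and the three pairings contributing $\kappa_{2,0}\kappa_{0,2}=\mu(f_1^2)\mu(f_2^2)$ once and $\kappa_{1,1}^2=[\mu(f_1f_2)]^2$ twice, for a total of $\mu(f_1^2f_2^2)+\mu(f_1^2)\mu(f_2^2)+2[\mu(f_1f_2)]^2$.

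Because the Laplace functional requires exponential integrability that the hypothesis $f_i\in L^1(\mu)\cap L^4(\mu)$ does not supply, I would make the argument rigorous by the elementary simple‑function route and then pass to the limit. When $f_1,f_2$ are simple and built on a common partition into disjoint sets $A_1,\dots,A_p$, the variables $N_i:=\mathbf{Q}(A_i)-\mu(A_i)$ are independent and centered with $\E[N_i^2]=\E[N_i^3]=\mu(A_i)$ and $\E[N_i^4]=\mu(A_i)+3[\mu(A_i)]^2$. Expanding $\bar{\mathbf{Q}}(f_1)^2\bar{\mathbf{Q}}(f_2)^2$ as a sum over four indices, deleting every term containing an index of multiplicity one, and collecting the surviving contributions in which all four indices coincide and in which they split into two pairs reproduces exactly the claimed right‑hand side; the scalar moments of $\bar{\mathbf{Q}}(f)$ are the special case $f_1=f_2=f$. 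The extension to general $f_i\in L^1(\mu)\cap L^4(\mu)$ uses that differences of simple functions are again simple, so the already‑proved fourth‑moment identity shows that $\bar{\mathbf{Q}}(g_k)$ is Cauchy in $L^4(\Omega)$ whenever simple functions $g_k$ converge to $f$ in $L^2(\mu)\cap L^4(\mu)$; the limit defines $\bar{\mathbf{Q}}(f)$, and both sides of each identity are continuous for this convergence, the cross terms being controlled by Hölder's inequality.

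The main obstacle is not conceptual but organizational: in the direct computation one must enumerate correctly the equality patterns of the four summation indices and check that the diagonal term $\sum_i a_i^2 b_i^2\mu(A_i)$, the off‑diagonal products, and the factor $3$ coming from the fourth moment of a centered Poisson variable recombine into $\mu(f_1^2f_2^2)+\mu(f_1^2)\mu(f_2^2)+2[\mu(f_1f_2)]^2$ rather than into some permutation of these. The one analytic point requiring care is the passage to the limit, which is precisely where the fourth‑moment integrability hypothesis $f_i\in L^1(\mu)\cap L^4(\mu)$ is used.
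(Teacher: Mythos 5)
Your argument is correct, and its core is the same as the paper's: the paper proves the proposition in one line by citing the characteristic function $\E[e^{i\bar{\mathbf{Q}}(f)}]=\exp(\mu(e^{if}-if-1))$, i.e.\ exactly the cumulant identity $\kappa_n=\mu(f^n)$ for $n\ge2$ (and, applied to $\theta_1f_1+\theta_2f_2$, the joint cumulants $\kappa_{m,n}=\mu(f_1^mf_2^n)$) followed by the moment--cumulant conversion you carry out. The one place you diverge is that you reach for the Laplace functional $\E[e^{\bar{\mathbf{Q}}(g)}]$, correctly observe that $f_i\in L^1(\mu)\cap L^4(\mu)$ does not guarantee the required exponential integrability, and therefore fall back on a simple-function computation plus an $L^4(\Omega)$ limit passage. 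The paper's choice of the \emph{characteristic} function sidesteps that issue entirely: $e^{i\bar{\mathbf{Q}}(g)}$ is bounded, the formula holds for all $g\in L^1(\mu)\cap L^2(\mu)$, and since the fourth moment of $\bar{\mathbf{Q}}(f)$ is finite under the stated hypotheses, the characteristic function is four times differentiable at the origin and its Taylor coefficients deliver the moments directly, with no approximation step. So your route buys nothing over the paper's except self-containedness of the combinatorial computation, at the cost of the extra density/continuity argument; had you replaced $e^{g}$ by $e^{ig}$ in your first display you would have reproduced the paper's proof verbatim. Your bookkeeping of the partitions with no singleton blocks (one four-block, one $\kappa_{2,0}\kappa_{0,2}$ pairing, two $\kappa_{1,1}^2$ pairings) is the correct count and matches the stated right-hand side.
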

Those formulas are easily deduced from the following formula for the characteristic function of $\bar{\mathbf{Q}}(f)$:
$\E[e^{i\bar{\mathbf{Q}}(f)}]=\exp(\mu(e^{if}-if-1))$.
\subsection{Proof of Lemma \ref{lemtigU3N}} 
%

Our Lemma \ref{lemtigU3N} will follow from Theorem 2 in \cite{MH}, if the process
\begin{align*}
 U(t):=\int_0^t\int_0^{b(r)}\int_\D \sum_{i=1}^\ell z_i(t-r) \bar{\mathcal{M}}(dr,du,d\z)
\end{align*}
satisfies the conditions of that Theorem. This section is devoted to the proof of the following Proposition.
\begin{proposition}\label{HanCondi}
We assume that the assumptions ${(\bf H_3)}$ and ${(\bf H_4)}$ hold and fix $T>0$. There exist a constant $C$ such that, 
 for all $0\le s<v<t\le T$, with $\beta=1+\alpha/2$,
\begin{itemize}
\item{(i)} $\E[|U(t)-U(s)|^2]\le C(t-s)$,
\item{(ii)} $\E[|U(t)-U(v)|^2|U(v)-U(s)|^2]\le C(t-s)^\beta$.
\end{itemize}
\end{proposition}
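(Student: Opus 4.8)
The whole point is to control moments of $U(t)-U(s)$, where $U$ is an integral against a compensated PRM. The strategy is to write these increments as single stochastic integrals with respect to $\bar{\mathcal M}$ and then apply the moment formulas of Proposition~\ref{MomComp}, exactly as was done in the proofs of Propositions~\ref{M3tC} and~\ref{M4tC}. The central algebraic fact I would establish first is a clean decomposition of the increment: writing
\begin{align*}
U(t)-U(s)=\int_0^t\int_0^{b(r)}\int_\D \sum_{i=1}^\ell z_i(t-r)\,\bar{\mathcal M}(dr,du,d\z)-\int_0^s\int_0^{b(r)}\int_\D \sum_{i=1}^\ell z_i(s-r)\,\bar{\mathcal M}(dr,du,d\z),
\end{align*}
I would split the $t$-integral at $r=s$ and group terms so that $U(t)-U(s)=I_1+I_2$, where $I_1$ integrates $\sum_i[z_i(t-r)-z_i(s-r)]$ over $r\in[0,s]$ and $I_2$ integrates $\sum_i z_i(t-r)$ over $r\in[s,t]$. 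Both are single integrals against $\bar{\mathcal M}$ over disjoint or nested domains, which is what makes the moment formulas applicable.

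\textbf{Part (i).} For the second moment I would apply $\E[(\bar{\mathbf Q}(f))^2]=\mu(f^2)$ with $f(r,u,\z)=\mathds{1}_{u\le b(r)}\sum_{i=1}^\ell z_i(t-r)-\cdots$. Taking the expectation first in $\mathcal M$ (conditionally on $b$) and then in $b,\eta$, and using $\mu(ds,du,d\z)=ds\,du\,\Pi(d\z)$, the integral over $u$ produces a factor $\bar b(r)$ and the integral over $\D$ produces, via $\E[\sum_{i=1}^\ell z_i(\cdot)^2\text{ terms}]$, expressions of the form $\L\,\E[Z(\cdot)^2]+\,[\E(L^2)-\L]M_1(\cdot)^2$, just as in Proposition~\ref{M3tC}. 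The key bound is then that $\sup_{0<r<T}\E[Z(r)^2]\le C$ from Proposition~\ref{M3tC}, together with $\sup M_1\le C$, so each contribution is bounded by a constant times the length of the $r$-interval, i.e.\ by $C(t-s)$; the $I_1$ part involving $z_i(t-r)-z_i(s-r)$ must be handled so that its second moment is also $O(t-s)$, which I expect to follow from the same moment identity applied to the increment integrand.

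\textbf{Part (ii).} This is the genuinely hard step, and it is where assumption $(\bf H_4)$ and the exponent $\beta=1+\alpha/2$ must enter. Here I would use the mixed fourth-moment identity $\E[(\bar{\mathbf Q}(f_1))^2(\bar{\mathbf Q}(f_2))^2]=\mu(f_1^2f_2^2)+\mu(f_1^2)\mu(f_2^2)+2[\mu(f_1f_2)]^2$ from Proposition~\ref{MomComp}, taking $f_1$ to encode the increment $U(t)-U(v)$ and $f_2$ the increment $U(v)-U(s)$. Because these two increments are supported on overlapping time ranges in the $r$-variable, the cross term $\mu(f_1f_2)$ and the genuinely fourth-order term $\mu(f_1^2 f_2^2)$ do not vanish, and controlling them requires the fourth-moment bound $\sup_{0<r<T}\E[Z(r)^4]\le C$ from Proposition~\ref{M4tC}. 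The product structure gives a bound of the shape $\E[\,\cdot\,]\le C(t-v)(v-s)+C(\text{product of second moments})$; the first piece is $\le C(t-s)^2\le C(t-s)^\beta$ only if one can afford $\beta\le 2$, which holds, while the subtler contributions from the randomness of $\eta$ hidden in the law of $Z$ will only be $(t-s)^\alpha$-H\"older, and this is precisely where $F(t)-F(s)\le C|t-s|^\alpha$ is used to upgrade an $O(t-s)\cdot O((t-s)^{\alpha/2})$ estimate to the required $O((t-s)^{1+\alpha/2})$. The main obstacle, concretely, is bookkeeping the overlapping supports so that the worst term is exactly of order $(t-s)^{\beta}$ rather than merely $(t-s)$, and verifying that the H\"older regularity of $F$ propagates through the renewal-type structure of $\E[Z(t-r)Z(s-r)]$ to yield the fractional exponent.
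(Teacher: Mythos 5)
Your overall strategy (split the increment at $r=s$, apply the moment formulas of Proposition~\ref{MomComp}, use ${(\bf H_4)}$ to generate the exponent $\alpha/2$) is the same as the paper's, but in both parts the estimate does not close as you describe, because the quantities to be bounded are \emph{self-referential}: the integrand over $[0,s]$ involves increments of $Z$ itself, and $Z$ solves a renewal-type equation. Concretely, for (i) the second-moment formula gives
\begin{align*}
\E[|U(t)-U(s)|^2]\le 2\int_s^t\E[L^2]\,\E[Z(t-r)^2]\,\bar b(r)\,dr+2\int_0^s\E[L^2]\,\E[|Z(t-r)-Z(s-r)|^2]\,\bar b(r)\,dr,
\end{align*}
and the first term is indeed $O(t-s)$ by Proposition~\ref{M3tC}; but the second integral runs over the \emph{whole} interval $[0,s]$, so the bound $\sup_r\E[Z(r)^2]\le C$ is useless there. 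One needs a separate lemma (the paper's Lemma~\ref{MajorG24}), proved by expanding $Z(t)-Z(s)$ via \eqref{ecartZ} and running Gronwall on $u\mapsto\E[|Z(t-s+u)-Z(u)|^2]$, to get $\E[|Z(t-r)-Z(s-r)|^2]\le G(t-r)-G(s-r)$ with $G(t)=C(t+F(t))$; only then does $\int_0^s[G(t-r)-G(s-r)]dr\le C(t-s)$ yield (i). Your ``I expect it to follow from the same moment identity'' skips exactly this step.

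The gap in (ii) is more serious. In the mixed-moment identity, the term $\mu(f_1^2f_2^2)$ integrated over $r\in[0,s]$ contains the diagonal contribution
\begin{align*}
\int_0^s\bar b(r)\,\E\left\{[Z(v-r)-Z(s-r)]^2\,[Z(t-r)-Z(v-r)]^2\right\}dr,
\end{align*}
a joint fourth moment of two increments of the \emph{same} copy of $Z$, which cannot be factored into a product of second moments since those increments are not independent. This is precisely the quantity you are trying to control, shifted in time; your proposed bound of the shape ``$C(t-v)(v-s)$ plus a product of second moments'' misses it. The paper closes the loop by also expanding $|Z(t)-Z(v)|^2|Z(v)-Z(s)|^2$ via \eqref{ecartZ} (using ${\bf1}_{v<\eta\le t}{\bf1}_{s<\eta\le v}=0$ and Cauchy--Schwarz with ${(\bf H_4)}$ to keep every term of order $(t-s)^{1+\alpha/2}$ or better), obtaining $\E(|Z(t)-Z(v)|^2|Z(v)-Z(s)|^2)\le C(t-s)^{1+\alpha/2}+\E(|U(t)-U(v)|^2|U(v)-U(s)|^2)$, and then applying Gronwall to $\psi(r)=\E(|Z(t-s+r)-Z(v-s+r)|^2|Z(v-s+r)-Z(r)|^2)$. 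Without this final Gronwall argument your bookkeeping cannot terminate: every attempt to bound the mixed moment of $U$ reintroduces the mixed moment of $Z$, and vice versa.
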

In the remaining of this section, $0\le s<v<t\le T$, where $T$ will be fixed. We define $G(t)=C(t+F(t))$, where the positive constant $C$ may vary from line to line.
\subsubsection{Proof of condition $(i)$}
From the above definition of $U$, we can rewrite \eqref{eqZt3} in the form
\begin{align}\label{expressZ}
Z(t)= \mathds{1}_{\eta >t} +\int_0^t \L\,M_1(t-s) b(s) ds  + U(t).
\end{align}
 Thus, let us first estimate, for $0\le s<t$,  $|\E[Z(t)-Z(s)]|$, $\E[(Z(t)-Z(s))^2]$ and $\E[(Z(t)-Z(s))^4]$. From the previous equation, we deduce that 
 \begin{align}\label{ecartZ}
 Z(t)-Z(s)= -{\bf1}_{s<\eta\le t}+ \int_s^t \L\, M_1(t-r)b(r)dr+\int_0^s \L\,[M_1(t-r)-M_1(s-r)]b(r)dr + U(t)-U(s),
  \end{align}
  where 
 \begin{align}\label{ecartU}
U(t)-U(s)=\int_s^t\int_0^{b(r)}\int_\D \sum_{i=1}^\ell z_i(t-r)\bar{\mathcal{M}}(dr,du,d\z)
 +\int_0^s\int_0^{b(r)}\int_\D \sum_{i=1}^\ell [z_i(t-r)-z_i(s-r)]\bar{\mathcal{M}}(dr,du,d\z) \, .
 \end{align}
We first have : 
  \begin{lemma}\label{ecartEZ}
For $0< s<t$, we have
 \begin{align*}
 |\E[Z(t)-Z(s)]|= |M_1(t)-M_1(s)|\le G(t)-G(s).
 \end{align*}
 \end{lemma}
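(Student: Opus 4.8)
The equality is immediate and carries no content: since $M_1(t)=\E[Z(t)]$ by definition, we have $\E[Z(t)-Z(s)]=M_1(t)-M_1(s)$, and taking absolute values gives the stated identity. The whole substance of the lemma is the inequality
\[
|M_1(t)-M_1(s)|\le G(t)-G(s)=C\big[(t-s)+(F(t)-F(s))\big],
\]
and the plan is to derive it from the renewal equation \eqref{eqMt1} by a convolution Gronwall argument. First I would compute the increment of $M_1$ directly from \eqref{eqMt1}: using $F^c(t)-F^c(s)=-(F(t)-F(s))$ and splitting the convolution integral over $[0,t]$ into $[0,s]$ and $[s,t]$, one gets
\begin{align*}
M_1(t)-M_1(s)=-(F(t)-F(s))+\int_0^s \L\,[M_1(t-r)-M_1(s-r)]\,\bar{b}(r)\,dr+\int_s^t \L\,M_1(t-r)\,\bar{b}(r)\,dr.
\end{align*}
Because $\sup_{[0,T]}M_1<\infty$ by \eqref{M1estim} and $\sup_{[0,T]}\bar{b}<\infty$ by $(\mathbf{H_1})$, the last term is bounded by $C(t-s)$.

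Next I would fix the increment $h=t-s$ and set $\phi(u)=|M_1(u+h)-M_1(u)|$. The crucial structural observation is that in the middle integral the two arguments $t-r$ and $s-r$ always differ by exactly $h$, so the change of variable $w=s-r$ turns it into a genuine convolution $\int_0^s \L\,\bar{b}(s-w)\,\phi(w)\,dw$ in terms of $\phi$. Taking absolute values throughout I would thus arrive at
\begin{align*}
\phi(s)\le (F(s+h)-F(s))+C\,h+B\int_0^s \phi(w)\,dw,
\end{align*}
where $B=\L\sup_{[0,T]}\bar{b}<\infty$. The standard integral Gronwall lemma then yields $\phi(s)\le f(s)+B\int_0^s f(w)e^{B(s-w)}\,dw$ with forcing $f(w)=(F(w+h)-F(w))+Ch$.

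The step I expect to be the main obstacle is keeping the Gronwall correction term of order $h$ rather than of order $F$: a crude estimate of the forcing would inflate the bound and destroy the clean telescoping form $G(t)-G(s)$. This is resolved by the elementary identity $\int_0^s (F(w+h)-F(w))\,dw=\int_s^{s+h}F-\int_0^h F\le h$, valid since $0\le F\le 1$. Consequently $\int_0^s f(w)e^{B(s-w)}\,dw\le e^{BT}\big(h+CT h\big)=C'h$, and substituting back gives $\phi(s)\le (F(s+h)-F(s))+C''h$. Choosing the constant in $G$ to be at least $\max(1,C'')$ (using $F(t)-F(s)\ge0$) then produces
\[
|M_1(t)-M_1(s)|=\phi(s)\le C\big[(F(t)-F(s))+(t-s)\big]=G(t)-G(s),
\]
as required. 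I would note in passing that only $(\mathbf{H_1})$ is used here; the Hölder assumption $(\mathbf{H_4})$ plays no role in this particular estimate.
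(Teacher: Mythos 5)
Your proof is correct and follows essentially the same route as the paper: both reduce to the increment identity $M_1(t)-M_1(s)=-(F(t)-F(s))+\int_s^t\L M_1(t-r)\bar b(r)\,dr+\int_0^s\L[M_1(t-r)-M_1(s-r)]\bar b(r)\,dr$ (the paper obtains it by taking expectations in \eqref{ecartZ}, you directly from \eqref{eqMt1} --- the same thing) and then apply Gronwall to $u\mapsto M_1(u+h)-M_1(u)$ with $h=t-s$. You additionally spell out the telescoping bound $\int_0^s(F(w+h)-F(w))\,dw\le h$ needed to keep the Gronwall correction of order $h$, a detail the paper leaves implicit (it is the same trick as in its Remark~\ref{MajorG}(1)).
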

\begin{proof}
 Taking the expectation in the identity \eqref{ecartZ}, we obtain
 \begin{align*}
 M_1(t)-M_1(s)&= F(s)-F(t)+\int_s^tM_1(t-r)\L\,\bar{b}(r)dr+\int_0^s[M_1(t-r)-M_1(s-r)]\L\,\bar{b}(r)dr,\\
 |M_1(t)-M_1(s)|&\le F(t)-F(s)+C(t-s)+C\int_0^s|M_1(t-r)-M_1(s-r)|dr\,
 \end{align*}
where we have used \eqref{M1estim}. The result now follows from Gronwall's Lemma applied to $\varphi(u)=M_1(t-s+u)-M_1(u)$, $0\le s\le u$.
    \end{proof}

In what follows, we will need the 
\begin{remark}\label{MajorG}
For $0<s<t$, it is easy to check that (recall that $s<t\le T$ and $T$ is fixed)
 \begin{align*}
(1)& \quad \int_0^s[G(t-r)-G(s-r)]dr \le \int_0^t G(r) dr-  \int_0^sG(r)dr  \le C(t-s)\\
(2) &\quad  G(t)-G(s)  \le C(t-s)^\alpha.
 \end{align*}
 \end{remark}
 Let us first establish a basic Lemma, which we will use several times.
  \begin{lemma}\label{basic}
  Let $n\in\N$, $\xi$ be an $\N$--valued r.v. and $\{X_i, i\ge1\}$ a sequence of identically distributed rv's
  having moment of order $n$,
  which is globally independent of $\xi$. Then
  \[ \E\left[\left(\sum_{i=1}^\xi X_i\right)^n\right]\le \E[\xi^n]\E[X_1^n]\,.\]
  \end{lemma}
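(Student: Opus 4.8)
The plan is to remove the randomness of the number of summands by conditioning on $\xi$, and then to control the $n$-th power of a finite sum by convexity. First I would condition on the value of $\xi$: since $\{X_i,\,i\ge1\}$ is globally independent of $\xi$, for each $m\in\N$ one has
\[
\E\left[\left(\sum_{i=1}^\xi X_i\right)^n \,\Big|\, \xi=m\right]=\E\left[\left(\sum_{i=1}^m X_i\right)^n\right].
\]
It therefore suffices to prove the fixed-index bound $\E\left[\left(\sum_{i=1}^m X_i\right)^n\right]\le m^n\,\E[X_1^n]$ for every $m$, and then to integrate against the law of $\xi$.

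For the fixed-$m$ bound I would invoke the convexity of $x\mapsto x^n$ on $\R_+$ (equivalently the power-mean inequality, i.e. Jensen's inequality for the uniform probability measure on $\{1,\dots,m\}$), which gives, for nonnegative reals $a_1,\dots,a_m$,
\[
\left(\sum_{i=1}^m a_i\right)^n\le m^{\,n-1}\sum_{i=1}^m a_i^{\,n}.
\]
Applying this pathwise to the $X_i$, taking expectations, and using that the $X_i$ are identically distributed so that $\E[X_i^n]=\E[X_1^n]$, I obtain
\[
\E\left[\left(\sum_{i=1}^m X_i\right)^n\right]\le m^{\,n-1}\sum_{i=1}^m\E[X_i^n]=m^n\,\E[X_1^n].
\]

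It then remains only to average over $\xi$: combining the two displays,
\[
\E\left[\left(\sum_{i=1}^\xi X_i\right)^n\right]=\sum_{m\ge1}\P(\xi=m)\,\E\left[\left(\sum_{i=1}^m X_i\right)^n\right]\le\E[X_1^n]\sum_{m\ge1}\P(\xi=m)\,m^n=\E[\xi^n]\,\E[X_1^n],
\]
which is the asserted inequality.

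There is no genuine obstacle here: the statement is essentially the power-mean inequality with a random number of terms. The two points deserving care are (a) that the convexity step uses $x\mapsto x^n$ on $\R_+$, hence the (tacit) nonnegativity of the $X_i$, which indeed holds in every application since the summands are population counts $Z_i(\cdot)\ge0$; and (b) that mutual independence of the $X_i$ is never used — only their common distribution (to replace $\E[X_i^n]$ by $\E[X_1^n]$) and their global independence from $\xi$ (to justify the conditioning) enter the argument.
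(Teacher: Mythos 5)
Your argument is correct and is essentially the paper's own proof: the authors also use the convexity bound $\bigl(\sum_{i=1}^{\xi}X_i\bigr)^n\le\xi^{n-1}\sum_{i=1}^{\xi}X_i^n$ and then condition on $\xi$ before taking expectations, which is the same computation as yours with the conditioning and the power-mean step performed in the opposite order. Your remark that nonnegativity of the $X_i$ is tacitly used is a fair observation, but there is no substantive difference between the two proofs.
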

  \begin{proof}
  We first note that $(\sum_{i=1}^\xi X_i)^n\le\xi^{n-1}\sum_{i=1}^\xi X_i^n$, then take the conditional expectation given $\xi$, 
  and finally the expectation.
  \end{proof}
 
We shall need 
  \begin{lemma}\label{MajorG24}
For $0< s<t$, we have
 \begin{align*}
 \E[|Z(t)-Z(s)|^2]\le G(t)-G(s)\quad \mbox{and} \quad \E[|Z(t)-Z(s)|^4]\le G(t)-G(s).
 \end{align*}
 \end{lemma}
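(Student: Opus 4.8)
The plan is to prove both moment bounds by the same strategy: decompose $Z(t)-Z(s)$ using \eqref{ecartZ}, estimate each of the four pieces separately using the moment formulas from Proposition \ref{MomComp}, and then close the estimate with Gronwall's Lemma applied to the appropriate quantity. The key structural observation is that the last term $U(t)-U(s)$ is itself an integral against a compensated PRM whose second moment is, by Proposition \ref{MomComp}, expressible in terms of $\E[|Z(\cdot)|^2]$ (and for the fourth moment, in terms of $\E[|Z(\cdot)|^4]$ via the $\mu(f^4)+3[\mu(f^2)]^2$ formula). This is what produces the self-referential integral inequality that Gronwall closes.

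\textbf{Second moment.} First I would write $\E[|Z(t)-Z(s)|^2]\le 4\sum$ of the second moments of the four terms in \eqref{ecartZ}. The indicator term contributes $\E[{\bf1}_{s<\eta\le t}]=F(t)-F(s)$. The two drift integrals are bounded using \eqref{M1estim}, Cauchy-Schwarz, and Assumption ${(\bf H_2)}$ (giving $\sup_s\E(b^2(s))<\infty$): the first gives a term of order $C(t-s)$, and the second a term of order $C\int_0^s|M_1(t-r)-M_1(s-r)|\,dr$, which by Lemma \ref{ecartEZ} and Remark \ref{MajorG}(1) is bounded by $C(t-s)$. For the compensated-integral term $U(t)-U(s)$, I apply $\E[(\bar{\mathbf{Q}}(f))^2]=\mu(f^2)$ from Proposition \ref{MomComp} to \eqref{ecartU}, using Lemma \ref{basic} to control $\E[|\sum_{i=1}^\ell z_i|^2]\le\E[L^2]\,\E[Z^2]$. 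This yields a bound involving $\int_s^t\bar b(r)\,\E[|Z(t-r)|^2]\,dr$ plus $\int_0^s\bar b(r)\,\E[|Z(t-r)-Z(s-r)|^2]\,dr$. Collecting everything and recalling $G(t)=C(t+F(t))$, I obtain an inequality of the form $\psi(t)-\psi(s)\le [G(t)-G(s)] + C\int_0^s \psi(t-r)-\psi(s-r)\,dr$ for $\psi(u)=\E[|Z(t-s+u)-Z(u)|^2]$ (after reindexing as in Lemma \ref{ecartEZ}), and Gronwall's Lemma delivers the stated bound.

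\textbf{Fourth moment.} The argument runs in parallel, starting from $\E[|Z(t)-Z(s)|^4]\le 4^3\sum$ over the four terms. The indicator again gives $F(t)-F(s)$; the drift terms are handled by Hölder together with ${(\bf H_3)}$ (giving $\sup\E(b^4)<\infty$) and the bounds already recorded, producing contributions of order $G(t)-G(s)$. The delicate piece is the fourth moment of $U(t)-U(s)$: here I use the fourth-moment formula $\mu(f^4)+3[\mu(f^2)]^2$ from Proposition \ref{MomComp}. The $\mu(f^4)$ part gives a linear integral $\int\bar b\,\E[|\sum z_i|^4]\,dr\le\int\bar b\,\E[L^4]\E[Z^4]\,dr$ via Lemma \ref{basic}, which feeds the Gronwall recursion; the crucial point is that the $3[\mu(f^2)]^2$ part is \emph{already} a square of a second-moment integral, hence by Proposition \ref{M4tC} (uniform fourth-moment bound, applicable under ${(\bf H_3)}$) and the second-moment estimate just proved it is bounded by $C([G(t)-G(s)])^2\le C(G(t)-G(s))$ on the fixed interval $[0,T]$, since $G$ is bounded there. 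After collecting terms I again reach a Gronwall-closable inequality for $\E[|Z(t)-Z(s)|^4]$.

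\textbf{The main obstacle} I expect is bookkeeping the cross term $\int_0^s\bar b(r)\,\E[|z_i(t-r)-z_i(s-r)|^4]\,dr$ in the fourth-moment estimate and ensuring it folds correctly into the Gronwall argument rather than introducing a worse power of $(t-s)$; one must be careful that every quadratic-in-second-moment contribution (from the $3[\mu(f^2)]^2$ term) is absorbed into a \emph{linear} $G(t)-G(s)$ bound using the already-established uniform bounds of Propositions \ref{M3tC} and \ref{M4tC} and the boundedness of $G$ on $[0,T]$. Once that reduction to a single increment $G(t)-G(s)$ is secured, Gronwall's Lemma finishes both inequalities simultaneously.
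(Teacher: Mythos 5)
Your proposal is correct and follows essentially the same route as the paper: the same decomposition of $Z(t)-Z(s)$ via \eqref{ecartZ} and \eqref{ecartU}, the same use of Proposition \ref{MomComp}, Lemma \ref{basic}, Lemma \ref{ecartEZ} and Remark \ref{MajorG}, and the same Gronwall argument applied to the reindexed increment $u\mapsto\E[|Z(t-s+u)-Z(u)|^2]$. Your treatment of the fourth moment, in particular absorbing the $3[\mu(f^2)]^2$ term into a linear $G(t)-G(s)$ bound via the boundedness of $G$ on $[0,T]$ and the already-proved second-moment estimate, is exactly the ``similar argument'' the paper leaves to the reader.
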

\begin{proof} From \eqref{ecartZ}, \eqref{ecartU} and Lemma \ref{basic}, we have
  \begin{align*}
  \E[|Z(t)-Z(s)|^2]&\le 5[F(t)-F(s)]+5\L^2(t-s)\int_s^t\E[b(r)^2]dr  +5\int_s^t \E[L^2]\E[(Z(t-r))^2] \bar{b}(r)dr  \\
  &\quad + 5s\L^2\int_0^s(M_1(t-r)-M_1(s-r))^2\E[b^2(r)]dr +5\int_0^s \E[L^2]\E[|Z(t-r)-Z(s-r)|^2]\bar{b}(r)dr.
  \end{align*}
However, using Lemma \ref{ecartEZ} and $(1)$ of Remark \ref{MajorG}, we note that 
  \begin{align*}
  5s\L^2\int_0^s(M_1(t-r)-M_1(s-r))^2\E[b^2(r)]dr \le C\int_0^s|M_1(t-r)-M_1(s-r)|dr \le C(t-s).
  \end{align*}
  Combining the last two estimates, our assumptions, Proposition \ref{M3tC} and the definition of $G$, we deduce that 
  \begin{align*}
  \E[|Z(t)-Z(s)|^2]&\le  G(t)-G(s)+C\int_0^s \E[|Z(t-r)-Z(s-r)|^2]dr,
  \end{align*}
  Hence the first assertion follows from Gronwall's Lemma applied to $\varphi(u)=\E[|Z(t-s+u)-Z(u)|^2)$, $0\le u\le s$.
  The second assertion is easily deduced by a similar argument, exploiting the fourth moment estimate in Proposition \ref{MomComp}. 
 \end{proof}

Let us now verify condition $(i)$. Using \eqref{ecartU}, Lemma \ref{basic}, Lemma \ref{MajorG24} and $(1)$ of Remark \ref{MajorG}, , we deduce that 
\begin{align}
\E[|U(t)-U(s)|^2]& \le 2 \int_s^t \E[L^2]\E[(Z(t-r))^2] \bar{b}(r)dr
+ 2\int_0^s \E[L^2]\E[|Z(t-r)-Z(s-r)|^2]\bar{b}(r)dr \label{estimU2}\\
&\le C(t-s)+C\int_0^s[G(t-r)-G(s-r)]dr\nonumber\\
&\le C(t-s). \nonumber
\end{align}

\subsubsection{Proof of condition $(ii)$}
We now turn to verifying condition $(ii)$. 
Recalling \eqref{ecartU}, we have that
\begin{align*}
U(t)-U(\nu)=\bar{\mathcal{M}}(f),\ \text{with }
f(r,u,\z)&=\left\{{\bf1}_{[0,v]}(r)\sum_{i=1}^\ell[z_i(t-r)-z_i(v-r)]+{\bf1}_{[v,t]}\sum_{i=1}^\ell z_i(t-r)\right\}{\bf1}_{u\le b(r)},\\
U(\nu)-U(s)=\bar{\mathcal{M}}(g),\ \text{with }
g(r,u,\z)&=\left\{{\bf1}_{[0,s]}(r)\sum_{i=1}^\ell[z_i(v-r)-z_i(s-r)]+{\bf1}_{(s,v]}\sum_{i=1}^\ell z_i(v-r)\right\}{\bf1}_{u\le b(r)}\,.
\end{align*}
Recall that $b$ and $\mathcal{M}$ are independent. 
Denoting by $\E_b$ the conditional expectation, given $b$, we have the following consequence of Proposition \ref{MomComp} . 
\begin{corollary}\label{moncondi}
We have $\E_b \left[(U(t)-U(\nu))^2(U(\nu)-U(s))^2 \right]=\mu(f^2g^2)+\mu(f^2)\mu(g^2)+2[\mu(fg)]^2\,.$
\end{corollary}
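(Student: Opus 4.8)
The plan is to read Corollary \ref{moncondi} as nothing more than a conditional application of the third moment identity in Proposition \ref{MomComp}. The key observation is that, once we condition on the process $b$, the two integrands $f$ and $g$ displayed just above the statement become \emph{deterministic} measurable functions on the space $E=\R_+\times\R_+\times\D$: the only dependence of $f$ and $g$ on randomness other than the integration variable $(r,u,\z)$ enters through the factor ${\bf1}_{u\le b(r)}$, which is frozen once $b$ is given (the quantities $z_i(t-r)$, $z_i(v-r)$, $\ell$ are carried by $\z$ and are thus part of the integration variable). Since $(b,\eta)$ and $\mathcal{M}$ are independent, the conditional law of $\mathcal{M}$ given $b$ is still that of a Poisson random measure on $(E,\mathcal{E})$ with mean measure $\mu(dr,du,d\z)=dr\,du\,\Pi(d\z)$, and $\bar{\mathcal{M}}$ its compensated version. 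I would therefore apply Proposition \ref{MomComp} with $\mathbf{Q}=\mathcal{M}$, $f_1=f$ and $f_2=g$ under the conditional expectation $\E_b$, which produces exactly $\mu(f^2g^2)+\mu(f^2)\mu(g^2)+2[\mu(fg)]^2$.

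Before invoking that identity, the one hypothesis that genuinely needs checking is $f,g\in L^1(\mu)\cap L^4(\mu)$, which must hold $\P$-almost surely in $b$. This is where the moment bounds of the previous subsection enter. Splitting $f$ into its piece supported on $r\in[0,v]$ and its piece supported on $r\in[v,t]$, integrating out $u$ over $[0,b(r)]$, and bounding $|\sum_{i=1}^\ell z_i(\cdot)|^k$ by means of Lemma \ref{basic}, one reduces $\mu(|f|^k)$ for $k\in\{1,2,4\}$ to integrals of the form $\int_0^t b(r)\,\E[L^k]\,\E[|Z(t-r)|^k]\,dr$ and $\int_0^v b(r)\,\E[L^k]\,\E[|Z(t-r)-Z(v-r)|^k]\,dr$. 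Under ${(\bf H_3)}$ these are finite for almost every realization of $b$ on $[0,T]$, since $\E[L^k]<\infty$ for $k\le 4$ and $\sup_{0<u<T}\E[|Z(u)|^k]\le C$ by Propositions \ref{M3tC} and \ref{M4tC}, while $r\mapsto b(r)$ is a.s. integrable on $[0,T]$ because $\E\int_0^T b(r)\,dr=\int_0^T\bar b(r)\,dr<\infty$. The same bounds apply verbatim to $g$.

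There is no serious analytic obstacle here: the statement is essentially a bookkeeping step that isolates, conditionally on $b$, the three contributions $\mu(f^2g^2)$, $\mu(f^2)\mu(g^2)$ and $2[\mu(fg)]^2$ that will each be estimated separately when verifying condition $(ii)$ of Proposition \ref{HanCondi}. The only point I would write out carefully — and the one that could be considered the crux — is the justification that conditioning on $b$ legitimately turns $f$ and $g$ into admissible deterministic integrands for Proposition \ref{MomComp} while leaving the mean measure $\mu$ unchanged; this rests precisely on the independence of $(b,\eta)$ and $\mathcal{M}$ assumed in Section 2, together with the a.s. integrability just established.
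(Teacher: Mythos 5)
Your proposal is correct and matches the paper's route exactly: the paper states the corollary as an immediate consequence of Proposition \ref{MomComp} applied conditionally on $b$, using the independence of $(b,\eta)$ and $\mathcal{M}$, which is precisely what you do. Your additional verification that $f,g\in L^1(\mu)\cap L^4(\mu)$ almost surely (via Lemma \ref{basic}, Propositions \ref{M3tC} and \ref{M4tC}, and the a.s.\ integrability of $b$) is a detail the paper leaves implicit, but it is the right check and introduces no divergence from the intended argument.
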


We define $A_1=\E[\mu(f^2g^2)]$, $A_2=\E[\mu(f^2)\mu(g^2)]$ and $A_3=\E[[\mu(fg)]^2]$. Through a simple computation we obtain
\begin{align}
A_1&=\int_0^s \bar{b}(r)\E\left\{\Big|\sum_{i,j=1}^L[Z_i(v-r)-Z_i(s-r)][Z_j(t-r)-Z_j(v-r)]\Big|^2 \right\}dr\nonumber\\
&\quad+\int_s^v\bar{b}(r)\E\left\{\Big|\sum_{i,j=1}^L Z_i(v-r)[Z_j(t-r)-Z_j(v-r)]\Big|^2\right\}dr\label{1}\\
A_2&=\E\left\{\left(\int_0^v{b}(r)\E\Big\{\Big|\sum_{i=1}^L [Z_i(t-r)-Z_i(v-r)]\Big|^2\Big\}dr+\int_v^t{b}(r)\E\Big\{\Big|\sum_{i=1}^L Z_i(t-r)\Big|^2\Big\}dr\right)\right.\nonumber\\
&\quad\quad \times\left.\left(\int_0^s{b}(r) \E\Big\{\Big|\sum_{i=1}^L [Z_i(v-r)-Z_i(s-r)]\Big|^2\Big\}dr+\int_s^v{b}(r)\E\Big\{\Big|\sum_{i=1}^L Z_i(v-r)\Big|^2\Big\}dr\right)\right\}\label{2}\\
A_3&= \E\left[\left(\int_0^sb(r)\E\Big\{\sum_{i,j=1}^L[Z_i(v-r)-Z_i(s-r)][Z_j(t-r)-Z_j(v-r)]\Big\}dr\right.\right.\nonumber\\
&\quad\quad\left.\left. +\int_s^vb(r)\E\Big\{\sum_{i,j=1}^L Z_i(v-r)[Z_j(t-r)-Z_j(v-r)]\Big\}dr\right)^2\right]\label{4}.
\end{align}
We have 
\begin{proposition}\label{proLONG}
For $0\le s< v<t$, we have 
 \begin{align}\label{estimA}
\sum_{i=1}^3 A_i \le C(t-s)^{1+\frac{\alpha}{2}}+ C \int_0^s \E\left\{[Z(v-r)-Z(s-r)]^2[Z(t-r)-Z(v-r)]^2\right\}dr.
\end{align}
\end{proposition}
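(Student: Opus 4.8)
The plan is to bound the three terms $A_1$, $A_2$, $A_3$ of \eqref{1}, \eqref{2}, \eqref{4} separately, showing that $A_2$ and $A_3$ are each $\le C(t-s)^{1+\alpha/2}$, while $A_1$ produces, besides a term of the same order, exactly the integral $\int_0^s\E\{[Z(v-r)-Z(s-r)]^2[Z(t-r)-Z(v-r)]^2\}dr$ kept on the right-hand side of \eqref{estimA}. Throughout I would use the same toolbox: Lemma \ref{basic} to dominate the random $\Pi$-sums $\sum_{i=1}^L(\cdot)$ by $\E[L^n]\E[(\cdot)^n]$; Lemmas \ref{MajorG24} and \ref{ecartEZ} to control the second/fourth moments and the means of the increments of $Z$ by increments of $G$; Propositions \ref{M3tC}--\ref{M4tC} for the bounded non-increment moments; the two inequalities of Remark \ref{MajorG} to turn time-integrals of $G$-increments into powers of $(t-s)$; and, since $b$ is independent of $\mathcal{M}$ and enters only through $\bar{b}(r)\le C$ or through $\E[b(r)b(r')]\le C$ (by ${(\bf H_3)}$ and Cauchy--Schwarz), to decouple the $b$-dependence whenever two time-integrals are multiplied.

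The terms $A_2$ and $A_3$ are the easy ones. For $A_2=\E[\mu(f^2)\mu(g^2)]$ I would first use $\E[b(r)b(r')]\le C$ to factor it as $\le C\big(\int_0^t\psi_f\big)\big(\int_0^v\psi_g\big)$, where each $\psi$ is a $\Pi$-expectation bounded, via Lemma \ref{basic} together with Lemma \ref{MajorG24} or Proposition \ref{M3tC}, by $C(G(t-r)-G(v-r))$ on $[0,v]$ (resp. $C(G(v-r)-G(s-r))$ on $[0,s]$) and by a constant on $[v,t]$ (resp. $(s,v]$). Remark \ref{MajorG}(1) then gives $\int_0^t\psi_f\le C(t-v)$ and $\int_0^v\psi_g\le C(v-s)$, whence $A_2\le C(t-s)^2\le C(t-s)^{1+\alpha/2}$. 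For $A_3=\E[[\mu(fg)]^2]$ the same decoupling gives $A_3\le C(\int_0^s|\alpha(r)|dr)^2+C(\int_s^v|\gamma(r)|dr)^2$, where $\alpha$ and $\gamma$ are the mean $\Pi$-products appearing in \eqref{4}; Cauchy--Schwarz together with Lemmas \ref{MajorG24}, \ref{ecartEZ} and Remark \ref{MajorG} bounds both integrals by $C(t-s)^{1+\alpha/2}$ (in fact by higher powers).

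The real work is $A_1$, which I would split according to \eqref{1} into the integral over $[0,s]$ and the integral over $[s,v]$. Writing $D_i=Z_i(v-r)-Z_i(s-r)$ and $E_i=Z_i(t-r)-Z_i(v-r)$, the $[s,v]$ piece is $\int_s^v\bar{b}(r)\E[(\sum_i Z_i(v-r))^2(\sum_j E_j)^2]dr$; here $Z(v-r)$ is not an increment, so Cauchy--Schwarz and Lemma \ref{basic} give an integrand $\le\sqrt{\E[L^4]\E[Z(v-r)^4]}\,\sqrt{\E[L^4]\E[E_1^4]}\le C\sqrt{G(t-r)-G(v-r)}\le C(t-s)^{\alpha/2}$, so this piece is $\le C(v-s)(t-s)^{\alpha/2}\le C(t-s)^{1+\alpha/2}$. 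For the $[0,s]$ piece I would expand $\E[(\sum_i D_i)^2(\sum_j E_j)^2]$ over the coincidence patterns of the four indices. The fully diagonal pattern contributes $\int_0^s\bar{b}(r)\L\,\E[D_1^2E_1^2]dr\le C\int_0^s\E\{[Z(v-r)-Z(s-r)]^2[Z(t-r)-Z(v-r)]^2\}dr$, which is precisely the integral kept on the right of \eqref{estimA}. The $2+2$ and lower-order patterns are products of second moments and means of the increments, hence bounded by products of $(G(v-r)-G(s-r))$ and $(G(t-r)-G(v-r))$, which Remark \ref{MajorG} integrates to $C(t-s)^{1+\alpha}$ or better.

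I expect the delicate point to be the ``$3+1$'' patterns (three equal indices and one isolated), which produce factors of the form $\E[D_1^2E_1]\,\E[E_1]$ (or the symmetric $\E[D_1E_1^2]\,\E[D_1]$). A crude Cauchy--Schwarz bounds such a term by $\sqrt{G(v-r)-G(s-r)}\,(G(t-r)-G(v-r))^{3/2}$, whose integral is only $C(t-s)^{(1+3\alpha)/2}$, which is \emph{not} $\le C(t-s)^{1+\alpha/2}$ when $\alpha<1/2$. The way around this is to apply Cauchy--Schwarz so as to bring out the square root of the target quantity, $\E[D_1^2E_1]\le\sqrt{\E[D_1^2]}\,\sqrt{\E[D_1^2E_1^2]}$, and then Young's inequality $ab\le\tfrac12\varepsilon a^2+\tfrac1{2\varepsilon}b^2$: a small multiple of $\E[D_1^2E_1^2]$ is absorbed into the diagonal integral already kept on the right of \eqref{estimA}, while the remainder, of the form $\int_0^s(G(v-r)-G(s-r))(G(t-r)-G(v-r))^2dr\le C(t-s)^{1+2\alpha}$, is controlled by Remark \ref{MajorG}. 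Collecting the diagonal contribution together with all the absorbed pieces into one constant times $\int_0^s\E\{[Z(v-r)-Z(s-r)]^2[Z(t-r)-Z(v-r)]^2\}dr$, and all the remainders into $C(t-s)^{1+\alpha/2}$, yields \eqref{estimA}.
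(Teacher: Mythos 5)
Your proposal is correct and follows essentially the same route as the paper: the same splitting of $A_1$ into the $[s,v]$ piece (handled by Cauchy--Schwarz and the fourth--moment bounds, giving $C(t-s)^{1+\alpha/2}$) and the $[0,s]$ piece (diagonal kept as the integral on the right of \eqref{estimA}, off--diagonal terms reduced to products of moments of increments), together with the decoupling of $b$ via $\E[b(r)b(r')]\le C$ to get $A_2+A_3\le C(t-s)^2$. Your explicit treatment of the ``$3+1$'' coincidence patterns --- writing $\E[D_1^2E_1]\le\sqrt{\E[D_1^2]}\sqrt{\E[D_1^2E_1^2]}$ and absorbing the $\E[D_1^2E_1^2]$ contribution into the retained integral via Young's inequality --- is exactly what is needed to justify the paper's unexplained claim that the first term of \eqref{1} is bounded by the diagonal integral plus $C\int_0^s\E[D^2]\,\E[E^2]\,dr$, so you have in fact supplied a detail the paper leaves implicit.
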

\begin{proof}
The proof is organized as follows. Step $1$ establishes that 
\begin{equation}\label{A1}
 A_1 \le C (t- s)^{1+\frac{\alpha}{2}}+ C \int_0^s \E\left\{[Z(v-r)-Z(s-r)]^2[Z(t-r)-Z(v-r)]^2\right\}dr. 
 \end{equation}
Steps $2$ shows that $A_2+A_3 \le C(t-s)^{2}.$

$Step\ 1$. We first estimate the last term in \eqref{1}. 
In view of Proposition \ref{M4tC}, Lemma \ref{MajorG24} and Remark \ref{MajorG} (2), 
\begin{align*} 
\E\left\{\Big| Z_i(v-r)[Z_j(t-r)-Z_j(v-r)]\Big|^2\right\} &\le \left[ \E\big[Z(v-r)\big]^4 \right]^{1/2} (t-s)^{\alpha/2} \le C(t-s)^{\alpha/2},
\end{align*} 
from with we easily deduce that the last term in \eqref{1} is bounded by $C(t-s)^{1+\alpha/2}$.

The first term on the right of  \eqref{1} is bounded by the second term on the right of \eqref{A1} plus $C$ times
\begin{align*}
\int_0^s\E\{|Z(\nu-r)-Z(s-r)|^2\}\E\{|Z(t-r)-Z(\nu-r)|^2\}dr\le C(t-\nu)^{\alpha}\int_0^s(\nu-s)\le C (t-s)^{1+\alpha},
\end{align*}
where we have used twice Lemma \ref{MajorG24}, and both items of Remark \ref{MajorG}. \eqref{A1} is established.

$Step\ 2$. We now note that this step of the proof, we estimate quantities of the form 
\[ \left|\E\left[ \int_x^y\int_z^u b(r)\varphi(r)b(s)\psi(s) drds\right]\right|=\left|\int_x^y\int_z^u \E[b(r)b(s)]\varphi(r)\varphi(s)drds\right|\le C\int_x^y\int_z^u|\varphi(r)\varphi(s)|drds,\]
since $\varphi$ and $\psi$ are deterministic  functions, and $b$ satisfies assumption {\bf (H3)}.

In view of this indication, the inequality
$A_2 \le C(t-s)^{2}$
follows easily from Lemma \ref{basic}, Lemma \ref{MajorG24} and $(1)$ of Remark \ref{MajorG}.
 Essentially the same arguments yield $A_3 \le C(t-s)^{2}$.
\end{proof}

It remains to deduce Condition (ii) from Proposition \ref{proLONG}. 
For that sake, we need to estimate\\ $\E\left\{Z(t)-Z(\nu)|^2|Z(\nu)-Z(s)|^2\right\}$. Using the notation
\[ B(\alpha,\beta):=\left(\int_\alpha^\beta \bar{L}M_1(\beta-r)b(r)dr\right)^2+\left(\int_0^\alpha \bar{L}[M_1(\beta-r)-M_1(\alpha-r)]b(r)dr\right)^2,\]
we deduce from \eqref{ecartZ} that
\begin{align*}
|Z(t)-Z(\nu)|^2&\le{\bf1}_{\nu<\eta\le t}+B(\nu,t)+|U(t)-U(\nu)|^2,\\
|Z(\nu)-Z(s)|^2&\le {\bf1}_{s<\eta\le \nu}+B(s,\nu)+|U(\nu)-U(s)|^2\,.
\end{align*}
We first note that ${\bf1}_{\nu<\eta\le t}{\bf1}_{s<\eta\le \nu}=0$. Moreover, from Lemma \ref{ecartEZ} and (1) of Remark \ref{MajorG},
\begin{align*}
\E\left\{{\bf1}_{\nu<\eta\le t}B(s,\nu)+{\bf1}_{s<\eta\le \nu}B(\nu,t)\right\}\le \E\{B(s,\nu)+B(s,\nu)\}\le C(t-s)^2\,.
\end{align*}
It follows readily from Cauchy-Schwartz and ${(\bf H_4)}$ that
\[ \E\left({\bf1}_{\nu<\eta\le t}b(r)\right)\le C(t-s)^{\alpha/2},\quad \E\left({\bf1}_{\nu<\eta\le t}b(r)b(r')\right)\le C(t-s)^{\alpha/2}\,.\]
Using those inequalities and \eqref{estimU2} with $\E$ replaced by $\E_b$, we easily deduce that
\begin{align*}
\E\left({\bf1}_{\nu<\eta\le t}|U(\nu)-U(s)|^2+{\bf1}_{s<\eta\le \nu}|U(t)-U(\nu)|^2\right)\le C(t-s)^{1+\alpha/2}\,.
\end{align*}
Next it is not hard to show that
\[ \E\left(B(\nu,t)|U(\nu)-U(s)|^2+B(s,\nu)|U(t)-U(\nu)|^2\right)\le C(t-s)^2\,.\]
We conclude from those last estimates that
\begin{align*}
\E\left(|Z(t)-Z(\nu)|^2|Z(\nu)-Z(s)|^2\right)\le C(t-s)^{1+\alpha/2}+\E\left(|U(t)-U(\nu)|^2|U(\nu)-U(s)|^2\right)\,.
\end{align*}
Combining this with \eqref{estimA}, we deduce that
\begin{align*}
\E\left(|Z(t)-Z(\nu)|^2|Z(\nu)-Z(s)|^2\right)\le C(t-s)^{1+\alpha/2}+C \int_0^s \E\left\{[Z(v-r)-Z(s-r)]^2[Z(t-r)-Z(v-r)]^2\right\}dr\,.
\end{align*}
Replacing above $t$ by $t-s+r$, $\nu$ by $\nu+s-r$, and $s$ by $r$, and defining\\
$\psi(r)= \E\left(|Z(t-s+r)-Z(\nu-s+r)|^2|Z(\nu-s+r)-Z(r)|^2\right)$, we deduce from the above that
\[ \psi(r)\le C(t-s)^{1+\alpha/2}+C\int_0^r\psi(u)du,\quad \forall\, 0\le r\le s\,.\]
Hence from Gronwall's Lemma, $\psi(r)\le C(t-s)^{1+\alpha/2}$, for all $0\le r\le s$. This combined with \eqref{estimA} yields
(ii).
\frenchspacing
\bibliographystyle{plain}

\end{document}